\documentclass[11pt,reqno]{amsart}
\usepackage{graphicx}
\usepackage[caption = false]{subfig}
\usepackage{amsmath,amsfonts,amssymb,comment}
\usepackage{mathrsfs}
\usepackage{siunitx}
\usepackage{float}
\usepackage{enumerate}
\usepackage{enumitem}
\usepackage{mathtools}
\usepackage{amsthm}
\usepackage{varioref}
\usepackage{float}
\usepackage{wrapfig}
\usepackage[english]{babel}
\usepackage[utf8x]{inputenc}\usepackage[english]{babel}
\usepackage{soul}
\usepackage{enumitem}
\usepackage{xcolor}
\usepackage{hyperref}
\usepackage{cite}
\usepackage{relsize} 
\usepackage{titlesec}
\usepackage{cleveref}
\usepackage{varioref}
\usepackage{aliascnt}
\usepackage{cleveref}   

\newtheorem{theorem}{Theorem}[section]

\newaliascnt{lemma}{theorem}
\newtheorem{lemma}[lemma]{Lemma}
\aliascntresetthe{lemma}

\newaliascnt{proposition}{theorem}

\aliascntresetthe{proposition}

\newaliascnt{corollary}{theorem}

\aliascntresetthe{corollary}

\theoremstyle{definition}
\newaliascnt{definition}{theorem}

\aliascntresetthe{definition}
\theoremstyle{remark}
\newaliascnt{remark}{theorem}
\newtheorem{remark}[remark]{Remark}
\aliascntresetthe{remark}

\newaliascnt{example}{theorem}

\aliascntresetthe{example}
\usepackage[margin=1in]{geometry}
\titleformat{\section}[block]
  {\normalfont\Large\bfseries}{\thesection}{1em}{}
\titlespacing*{\section}
  {0pt}{10pt plus 4pt minus 2pt}{7pt plus 4pt minus 2pt}
\titleformat{\subsection}[block]
  {\normalfont\large\bfseries}{\thesubsection}{1em}{}
\titlespacing*{\subsection}
  {0pt}{8pt plus 3pt minus 2pt}{5pt plus 2pt minus 1pt}

\theoremstyle{definition}

\theoremstyle{remark}

\numberwithin{equation}{section}

\Crefname{theorem}{theorem}{theorems}
\Crefname{lemma}{lemma}{lemmas}
\Crefname{proposition}{proposition}{propositions}
\Crefname{corollary}{corollary}{corollaries}
\Crefname{definition}{definition}{definitions}
\Crefname{remark}{remark}{remarks}
\Crefname{example}{example}{examples}

\Crefname{theorem}{Theorem}{Theorems}
\Crefname{lemma}{Lemma}{Lemmas}
\Crefname{proposition}{Proposition}{Propositions}
\Crefname{corollary}{Corollary}{Corollaries}
\Crefname{definition}{Definition}{Definitions}
\Crefname{remark}{Remark}{Remarks}
\Crefname{example}{Example}{Examples}
\begin{document}
	\title[On Geometric properties and Coefficient bounds for $\mathcal{S}^*_{B}$]{On Geometric properties and Coefficient bounds for $\mathcal{S}^*_{B}$}
	    \author[S. Sivaprasad Kumar]{S. Sivaprasad Kumar}
	\address{Department of Applied Mathematics, Delhi Technological University, Bawana Road, Delhi-110042, INDIA}
	\email{spkumar@dce.ac.in}
        \author[Arya Tripathi]{Arya Tripathi}
	\address{Department of Applied Mathematics, Delhi Technological University, Bawana Road, Delhi-110042, INDIA}
	\email{ms.aryatripathi\_25phdam03@dtu.ac.in}
	\subjclass[2020]{30C45 · 30C50}
	\keywords{Starlike functions · Balloon-Shaped domain · Hankel determinant · Toeplitz Determinant · Hermitian-Toeplitz Determinant }
\begin{abstract}
\begin{sloppypar}
This paper deals with the geometric properties of functions belonging to the class $\mathcal{S}^*_{B}$ of starlike functions associated with a balloon-shaped domain, given by   \[
        \mathcal{S}^{\ast}_{B}= \left\{ f \in \mathcal{A} : \frac{z f'(z)}{f(z)} \prec \frac{1}{1-\log (1+z)} :=B(z), \quad z \in \mathbb{D} \right\},
    \] and also derive sharp bounds for the Zalcman functionals, Krushkal inequality, third-order Hankel, Toeplitz and Hermitian-Toeplitz determinant. The sharpness of these results are verified by constructing suitable extremal functions.
    \end{sloppypar}
\end{abstract}

\maketitle

\section{\hspace{5pt} Introduction}

Let $\mathcal{A}$ denote the family of analytic functions $f$ in $\mathbb{D}=\{z\in\mathbb{C}: |z|<1\}$, normalized by these conditions $f(0)=0$ and $f'(0)=1$. For $f\in \mathcal{A}$:

    \begin{equation}
        f(z) = z+a_2z^2+a_3z^3+\dots =z + \sum_{n=2}^{\infty} a_n z^n, \label{eq:1.1}
    \end{equation}

and $\mathcal{S}\subset \mathcal{A}$ represent the class of univalent functions. A significant part of the literature has been devoted to the study of coefficient estimates and related functionals for various subclasses of $\mathcal{S}$ defined by geometric or analytic conditions. Among these, the class of starlike functions plays a fundamental role in geometric function theory. A function $f \in \mathcal{S}$ is called starlike if it maps the open unit disk $\mathbb{D}$ onto a starlike domain with respect to the origin. This class $\mathcal{S}^*$ admits the following analytic characterization:

    \[
        \mathcal{S}^*=\left\{ f\in\mathcal{S} :\Re\!\left(\frac{z f'(z)}{f(z)}\right)>0,\quad z\in\mathbb{D} \right\}.
    \]

For analytic functions $f$ and $g$, we say that $f$ is subordinate to $g$, written $f \prec g$, if $f(z)=g(w(z))$ for some Schwarz function $w$ with $w(0)=0$ and $|w(z)|<1$. Ma and Minda \cite{MaMinda1994} developed a framework for the study of starlike functions based on the concept of subordination in 1992. Let $\varphi$ be an analytic and univalent function in the unit disk $\mathbb{D}$ satisfying $\Re(\varphi(z))>0$, normalized by $\varphi(0)=1$ and $\varphi'(0)>0$, and such that $\varphi(\mathbb{D})$ is starlike with respect to the point $1$ and symmetric with respect to the real axis. The Ma-Minda class of starlike functions associated with
$\varphi$, denoted by $\mathcal{S}^*(\varphi)$ and defined as:

    \[
        \mathcal{S}^*(\varphi)=\left\{ f\in\mathcal{A} :\frac{z f'(z)}{f(z)} \prec \varphi(z), \quad z\in\mathbb{D} \right\}.
    \]
Different choices of $\varphi$ yield well-known subclasses of $\mathcal{S}^{\ast}$, see~\cite{cho_kumar_kumar_ravichandran2019, Sokol1996}. Motivated by the Ma-Minda class of starlike functions, we recall the subclass of analytic functions associated with the function 
$\varphi(z)=1/(1-\log (1+z)):=B(z)$, which was recently introduced in \cite{kumar_arya_balloon}. The class is defined as follows:
    \[
        \mathcal{S}^{\ast}_{B}= \left\{ f \in \mathcal{A} : \frac{z f'(z)}{f(z)} \prec \dfrac{1}{1-\log (1+z)}:= B(z), \quad z \in \mathbb{D} \right\},
    \] 

which is associated to a balloon-shaped domain $B(\mathbb{D})$, where $B(\mathbb{D})$ is given by:  
    \[
        B(\mathbb{D}) = \left\{ w \in \mathbb{C}\setminus \{0\} : \big|\exp\!\left(1 - \tfrac{1}{w}\right) - 1 \big| < 1 \right\}.
    \] 
The Bieberbach conjecture, as documented in \cite{Goodman1983}, has been instrumental in the evolution of univalent function theory. Motivated by this line of research, Pommerenke \cite{Pommerenke1967} introduced the concept of the $q$-th Hankel determinant in 1966. For a function $f\in\mathcal{A}$, the $q$-th Hankel determinant of order $n$ is defined by:
 
    \[
        H_{q,n}(f)=
            \begin{vmatrix}
                a_n & a_{n+1} & \cdots & a_{n+q-1} \\
                a_{n+1} & a_{n+2} & \cdots & a_{n+q} \\
                \vdots & \vdots & \ddots & \vdots \\
                a_{n+q-1} & a_{n+q} & \cdots & a_{n+2q-2}
            \end{vmatrix}, \quad q,n\in \mathbb{N}.
    \]


The second Hankel determinant $H_{2,1}(f)=a_1a_3-a_2^2$ where $a_1=1$, has been extensively studied for many subclasses of univalent functions, including starlike, convex, close-to-convex, and typically real functions. In comparison, the analysis of the third-order Hankel determinant
    
    \begin{equation}
        H_{3,1}(f) = 
            \begin{vmatrix}
                1 & a_2 & a_3 \\
                a_2 & a_3 & a_4 \\
                a_3 & a_4 & a_5
            \end{vmatrix}
            =2a_2a_3a_4+a_3a_5-a_2^2a_5-a_3^3-a_4^2,\label{H31}
    \end{equation}
    
which depends simultaneously on the initial coefficients $a_2, a_3, a_4$ and $a_5$, is significantly more involved and has gained
attention only in recent years. Several authors have obtained sharp bounds for the third-order Hankel determinant for various subclasses of univalent functions. A brief overview of some known results is given in~\Cref{tab:third_hankel}. 

\begin{table}[ht]
\centering
\begin{tabular}{|c|c|c|}
\hline
\textbf{Class} & \textbf{Sharp bound} & \textbf{References} \\
\hline
$\mathcal{S}^*(0)$& $4/9$ & Kowalczyk \emph{et al.}~\cite{Kowalczyk_and_lecko_third_hankel} \\ \hline

$\mathcal{S}^*(1/2)$& $1/9$ & Rath \emph{et al.}~\cite{Rath_and_lecko_third_hankel} \\ \hline

$\mathcal{S}^*\!(\sqrt{1+z})$ & $1/36$ & Banga \emph{et al.}\cite{Banga_third_hankel} \\ \hline

$\mathcal{S}^*(1+\arctan(z))$& $1/9$ & Kumar \emph{et al.}\cite{kumar_verma2024} \\ \hline

\end{tabular}
\vspace{0.1cm}
\caption{List of sharp bounds of the third-order Hankel determinants.}
\vspace{-0.3cm}
\label{tab:third_hankel}
\end{table}

Along with Hankel determinants, Toeplitz determinants associated with the Taylor coefficients of analytic functions have also been studied due to their relevance in coefficient problems. In this paper, we additionally obtain bounds for the third-order Toeplitz determinant for the starlike functions associated with the balloon-shaped domain. Toeplitz matrices contain same entries along their diagonals. For $f \in \mathcal{A}$, the Toeplitz determinant is given by:

    \begin{equation}
        \mathcal{T}_{q,n}(f) = 
            \begin{vmatrix}
            a_n & a_{n+1} & \cdots & a_{n+q-1} \\
            a_{n+1} & a_n & \cdots & a_{n+q-2} \\
            \vdots & \vdots & \ddots & \vdots \\
            a_{n+q-1} & a_{n+q-2} & \cdots & a_n
    \end{vmatrix},
        \quad q,n \in \mathbb{N}. \nonumber
    \end{equation}
    
Third-order Toeplitz determinant is given by:
    \begin{equation}
        \mathcal{T}_{3,1}(f) = 
            \begin{vmatrix}
                1 & a_2 & a_3 \\
                a_2 & 1 & a_2 \\
                a_3 & a_2 & 1
            \end{vmatrix}
            =1+2a_2^2a_3-2a_2^2-a_3^2.\label{toeplitz}
    \end{equation}

Several authors have obtained sharp bounds for the third-order Toeplitz determinant for various subclasses of univalent functions, see~\cite{giri2025}. Furthermore, the present investigation deals with the sharp bounds for the third-order Hermitian-Toeplitz determinant for the class $\mathcal{S}^*_{B}$. The Hermitian-Toeplitz determinant of order n is defined by:

    \begin{equation}
        H^T_{q,n}(f) = 
            \begin{vmatrix}
                a_n & a_{n+1} & \cdots & a_{n+q-1} \\
                \overline{a}_{n+1} & a_n & \cdots & a_{n+q-2} \\
                \vdots & \vdots & \ddots & \vdots \\
                \overline{a}_{n+q-1} & \overline{a}_{n+q-2} & \cdots & a_n
            \end{vmatrix},
            \quad q,n \in \mathbb{N},\nonumber
    \end{equation}

where $\overline{a}_{k}:=\overline{a_{k}}$. Third-order Hermitian-Toeplitz determinant is given by:

    \begin{equation}
        H^T_{3,1}(f) = 
            \begin{vmatrix}
                1 & a_2 & a_3 \\
                \overline{a_2} & 1 & a_2 \\
                \overline{a_3} & \overline{a_2} & 1
            \end{vmatrix}
        =2Re(a_2^2\,\overline{a_3})-2|a_2|^2-|a_3|^2+1. \label{T31f}
    \end{equation}
Upper and lower estimates for the Hermitian-Toeplitz determinant have been derived by numerous authors for a variety of subclasses of analytic and univalent functions. A summary of representative results is presented in~\Cref{tab:third_hermitian}.

\begin{table}[ht]
\centering
\begin{tabular}{|c|c|c|c|}
\hline
\textbf{Class} & \textbf{Bounds} & \textbf{References} \\
\hline
$\mathcal{S}$ & $-1 \leq|H^T_{3,1}(f)|\leq8$ &  Obradovi\'c \emph{et al.}~\cite{Obradovic_hermitian} \\ \hline
$\mathcal{S}^*(e^z)$ &$-1/15\leq|H^T_{3,1}(f)|\leq1$ & Sarkar~\cite{sarkar_hermitian_toeplitz} \\ \hline
$\mathcal{S}^*_{SG}$ & $35/64\leq|H^T_{3,1}(f)|\leq1$& Cho \emph{et al.}~\cite{cho_kumar_kumar2022} \\ \hline
$\mathcal{ST}(i)$ &$-9\leq|H^T_{3,1}(f)|\leq8$ & Jastrzebski \emph{et al.}~\cite{Jastrzebski_hermitian} \\ \hline
\end{tabular}
\vspace{0.1cm}
\caption{List of sharp bounds of the third-order Hermitian-Toeplitz determinants.}
\vspace{-0.3cm}
\label{tab:third_hermitian}
\end{table}

Coefficient problems constitute a central theme in geometric function theory, as they provide quantitative information about the analytic and geometric behavior of functions through their Taylor coefficients. In particular, determinants formed from these coefficients, such as Hankel, Toeplitz, and Hermitian-Toeplitz determinants, measure nonlinear dependencies among coefficients and play a crucial role in distinguishing various subclasses of univalent functions. Motivated by these developments, the present paper is devoted to a detailed study of the starlike class associated with the balloon-shaped domain $\mathcal{S}^*_{B}$. We begin by investigating geometric properties of the associated function $B(z)$ for $z \in \mathbb{D}$, including bounds, radius of convexity, and disk containment result describing both the largest disk contained in $B(\mathbb{D})$ and the smallest disk containing $B(\mathbb{D})$. Subsequently, we obtain bounds for the Zalcman functional $Z_n(f)$ for $n=2$ and $n=3$, as well as estimates related to the Krushkal inequality $K_{n,m}(f)$ for the cases $n=4$ and $n=5$. We further establish sharp bounds for the third-order Hankel determinant $|H_{3,1}(f)|$, third-order Toeplitz determinant $|T_{3,1}(f)|$, and third-order Hermitian-Toeplitz determinant $|H^T_{3,1}(f)|$. All results obtained are shown to be sharp, with extremal functions identified explicitly. The methods developed here are expected to be applicable to related subclasses defined through other geometric or analytic conditions.

\section{\hspace{5pt} Preliminary results}\label{3}
Let $\mathcal{P}$ be the Carath\'eodory class of functions with positive real part in $\mathbb{D}$. The Taylor series expansion of $f \in \mathcal{P}$ is defined as:
    \begin{equation}
        p(z) =1+p_1z+p_2z^2+\dots= 1 + \sum_{n=1}^{\infty}p_n z^n.\label{pp}
    \end{equation}
The Carath\'eodory class and its associated coefficient bounds play a crucial role in establishing the sharp bounds for the Hankel determinant. This section provides key lemmas that form the foundation for the main results.

\begin{lemma}\label{lem1}\cite{caratheodory1907,Pommerenke1975}
Let \( p \in \mathcal{P} \)and of the form \eqref{pp}. Then, the following inequality hold true
\begin{equation}
|p_{n+k} - \mu p_n p_k| \leq 2 \max \{ 1, |2\mu - 1| \} =
\left\{
\begin{array}{ll}
2, & \text{if } 0 \leq \mu \leq 1, \\[6pt]
2|2\mu - 1|, & \text{Otherwise}.
\end{array}
\right.\nonumber
\end{equation}
\end{lemma}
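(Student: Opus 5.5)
I will prove the inequality from the Herglotz representation of $p\in\mathcal{P}$. There is a probability measure $\nu$ on $[0,2\pi)$ with
\[
p(z)=\int_0^{2\pi}\frac{1+e^{-it}z}{1-e^{-it}z}\,d\nu(t),
\]
so $p_n=2\int e^{-int}\,d\nu(t)$ for all $n\ge 1$. Write $c_n=\int e^{-int}\,d\nu$, so $p_n=2c_n$ with $|c_n|\le 1$. Then
\[
p_{n+k}-\mu p_np_k=2\bigl(c_{n+k}-2\mu c_nc_k\bigr).
\]
It therefore suffices to show $|c_{n+k}-2\mu c_nc_k|\le \max\{1,|2\mu-1|\}$.

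\textbf{The key case $\mu=1/2$.} Here I must show $|c_{n+k}-c_nc_k|\le 1$. Since $\nu$ is a probability measure,
\[
c_{n+k}-c_nc_k=\int \bigl(e^{-int}-c_n\bigr)\bigl(e^{-ikt}-c_k\bigr)\,d\nu(t).
\]
This is a covariance-type expression. By Cauchy--Schwarz it is bounded by
\[
\Bigl(\int|e^{-int}-c_n|^2d\nu\Bigr)^{1/2}\Bigl(\int|e^{-ikt}-c_k|^2d\nu\Bigr)^{1/2}=\sqrt{(1-|c_n|^2)(1-|c_k|^2)}.
\]
Hence
\[
|c_{n+k}-c_nc_k|\le\sqrt{(1-|c_n|^2)(1-|c_k|^2)}\le 1.
\]
This refined inequality is the heart of the argument.

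\textbf{General $\mu$.} Write $c_{n+k}-2\mu c_nc_k=(c_{n+k}-c_nc_k)-(2\mu-1)c_nc_k$. Using the refined bound and AM--GM,
\[
|c_{n+k}-2\mu c_nc_k|\le \sqrt{(1-|c_n|^2)(1-|c_k|^2)}+|2\mu-1||c_n||c_k|\le \tfrac{1}{2}(1-a^2+1-b^2)+|2\mu-1|ab,
\]
where $a=|c_n|$ and $b=|c_k|$, both in $[0,1]$. Better still, apply Cauchy--Schwarz in $\mathbb{R}^2$ to the vectors $(\sqrt{1-a^2},a)$ and $(\sqrt{1-b^2},b)$:
\[
\sqrt{(1-a^2)(1-b^2)}+ab\le 1.
\]
If $|2\mu-1|\le 1$, this gives the bound $1$ directly. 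If $|2\mu-1|>1$, then
\[
\sqrt{(1-a^2)(1-b^2)}+|2\mu-1|ab\le |2\mu-1|\bigl(\sqrt{(1-a^2)(1-b^2)}+ab\bigr)\le |2\mu-1|.
\]
Multiplying by $2$ yields the claimed estimate. The two-case form in the statement follows because $|2\mu-1|\le1$ exactly when $0\le\mu\le1$, for real $\mu$.

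\textbf{Main obstacle and sharpness.} The main obstacle is the covariance identity in the key case, which replaces the crude bound $|c_{n+k}|+|c_nc_k|\le 2$. The rest is elementary. Equality is attained by $p(z)=(1+z)/(1-z)$ when $|2\mu-1|\ge1$, and by $p(z)=(1+z^{n+k})/(1-z^{n+k})$ when $0\le\mu\le1$; in the second case $p_n=p_k=0$ (for $n,k\ge1$) and $p_{n+k}=2$.
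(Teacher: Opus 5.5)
The paper gives no proof of this lemma; it is quoted from the literature (Carath\'eodory, Pommerenke). Your argument is a correct, self-contained proof by a different route from simply citing those sources.

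The individual steps check out:
\begin{itemize}
\item The Herglotz normalization $p_n=2c_n$ with $c_n=\int e^{-int}\,d\nu$ is right.
\item The covariance identity $c_{n+k}-c_nc_k=\int (e^{-int}-c_n)(e^{-ikt}-c_k)\,d\nu$ holds.
\item Cauchy--Schwarz is legitimate for this bilinear (unconjugated) integrand, since you only need $|\int fg\,d\nu|\le \int|f||g|\,d\nu\le\|f\|_2\|g\|_2$.
\item The computation $\int|e^{-int}-c_n|^2\,d\nu=1-|c_n|^2$ is correct.
\end{itemize}

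Your proof gives more than the citation does. It yields the sharper intermediate estimate $|p_{n+k}-\tfrac12p_np_k|\le\tfrac12\sqrt{(4-|p_n|^2)(4-|p_k|^2)}$, which for $n=k=1$ is the classical $|p_2-\tfrac12p_1^2|\le2-\tfrac12|p_1|^2$. It also gives the $\max$ form for arbitrary complex $\mu$, and it comes with explicit extremal functions.

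Two small remarks. First, your AM--GM line already finishes the proof. With $M=|2\mu-1|$ you have $1-\tfrac12(a^2+b^2)+Mab\le 1+(M-1)ab\le\max\{1,M\}$, so the $\mathbb{R}^2$ Cauchy--Schwarz step is an alternative rather than a needed improvement. Second, you are right to restrict the piecewise form to real $\mu$. For complex $\mu\notin[0,1]$ with $|2\mu-1|<1$, the ``otherwise'' branch $2|2\mu-1|<2$ is actually false. Your own example $p(z)=(1+z^{n+k})/(1-z^{n+k})$ gives $|p_{n+k}-\mu p_np_k|=2$ for every $\mu$, so the statement as printed is only valid for real $\mu$, which is how the paper uses it.
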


\begin{lemma} \label{lemma2.1} \cite{kwon_adam_p1p2lemma, libera_p1p2lemma}
Let \( p \in \mathcal{P} \). Then
    \begin{eqnarray}
        p_2 &=& \frac{1}{2}\left(p_1^2 + \gamma (4 - p_1^2)\right),\nonumber\\
        p_3 &=& \frac{1}{4}\Big(p_1^3 + 2p_1(4 - p_1^2) \gamma - p_1(4 - p_1^2) \gamma^2 + 2(4 - p_1^2)(1 - |\gamma|^2)\eta\Big), \nonumber\\
        p_4 &=& \frac{1}{8}\Big(p_1^4 + (4 - p_1^2) \gamma \bigl(p_1^2(\gamma^2 - 3\gamma + 3) + 4\gamma\bigr)- \nonumber\\
        & & \quad  4(4 - p_1^2)(1 - |\gamma|^2)\bigl(p_1(\gamma - 1)\eta + \overline{\gamma}\eta^2 - (1 -|\eta|^2)\rho\bigr)\Big) \nonumber,
    \end{eqnarray}
for some \( \rho, \gamma \) and \( \eta \) such that \( |\rho| \leq 1, \, |\gamma| \leq 1 \) and \( |\eta| \leq 1 \).
\end{lemma}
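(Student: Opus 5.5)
The statement to prove is the parametrization in Lemma~\ref{lemma2.1} of $p_2,p_3,p_4$ for $p\in\mathcal{P}$. My plan is to use the Schur algorithm on the Carath\'eodory class. The key steps are: reduce to a normalization of $p_1$, peel off Schur parameters one at a time, then expand and regroup.

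\textbf{Normalization.} Since $\mathcal{P}$ is invariant under rotations $p(z)\mapsto p(e^{i\theta}z)$, and the formulas are stated with $p_1$ possibly complex, I will work directly with the general $p_1$, $|p_1|\le 2$. The correspondence
\[
p=\frac{1+\omega}{1-\omega}
\]
is a bijection between $\mathcal{P}$ and the Schwarz-type functions $\omega(z)=c_1z+c_2z^2+\dots$ with $|\omega|<1$. The relations are obtained by equating coefficients of $p(1-\omega)=1+\omega$:
\[
p_1=2c_1,\qquad p_2=2c_2+2c_1^2,\qquad p_3=2c_3+4c_1c_2+2c_1^3,\qquad \dots
\]

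\textbf{Schur algorithm.} Write $\omega(z)=z\,\phi_0(z)$ with $\phi_0$ a self-map of $\overline{\mathbb{D}}$. Set $\gamma_0=\phi_0(0)=c_1=p_1/2$. Then iterate
\[
\phi_{k+1}(z)=\frac{\phi_k(z)-\gamma_k}{z\,(1-\overline{\gamma_k}\,\phi_k(z))},\qquad \gamma_{k+1}=\phi_{k+1}(0),
\]
with all $|\gamma_k|\le1$ by Schwarz's lemma. Inverting the recursion gives $c_2=(1-|\gamma_0|^2)\gamma_1$. For $c_3$, one further step gives
\[
c_3=(1-|\gamma_0|^2)\bigl(\gamma_2(1-|\gamma_1|^2)-\overline{\gamma_0}\gamma_1^2\bigr),
\]
and a similar expression for $c_4$ involves $\gamma_3$ with the factor $(1-|\gamma_0|^2)(1-|\gamma_1|^2)(1-|\gamma_2|^2)$. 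With $\gamma=\gamma_1$, $\eta=\gamma_2$, $\rho=\gamma_3$ and $4-|p_1|^2=4(1-|\gamma_0|^2)$, substituting into the $p_k$--$c_k$ relations produces the stated forms.

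The statement writes $4-p_1^2$ rather than $4-|p_1|^2$. This tells me the intended reading is with $p_1\in[0,2]$ after rotation, which is the standard convention in the Libera--Z{\l}otkiewicz and Kwon--Lecko--Sim results. I will state that normalization explicitly: rotate so that $p_1\ge0$, under which $\overline{\gamma_0}=\gamma_0=p_1/2$.

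\textbf{Verification.} For $p_2$ the computation is immediate:
\[
2c_2+2c_1^2=2(1-p_1^2/4)\gamma+p_1^2/2=\tfrac12\bigl(p_1^2+(4-p_1^2)\gamma\bigr).
\]
For $p_3$, expanding $2c_3+4c_1c_2+2c_1^3$ and collecting terms matches the given formula, including the $-p_1(4-p_1^2)\gamma^2$ term, which arises from $-\overline{\gamma_0}\gamma_1^2$. For $p_4$ I first need $c_4$ in terms of the $\gamma_k$, obtained from one more Schur step. Its expression contains terms such as $\overline{\gamma_1}\gamma_2^2$ and $\gamma_0^2\gamma_1^3$. Then $p_4=2c_4+4c_1c_3+2c_2^2+6c_1^2c_2+2c_1^4$, which follows by expanding $(1+\omega)/(1-\omega)=1+2\omega+2\omega^2+\dots$.

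\textbf{Main obstacle.} The hard step is this last algebraic regrouping for $p_4$: matching the expansion into the compact form $\gamma\bigl(p_1^2(\gamma^2-3\gamma+3)+4\gamma\bigr)$ and the bracket $p_1(\gamma-1)\eta+\overline{\gamma}\eta^2-(1-|\eta|^2)\rho$. I would do it term by term, grouping by powers of $\eta$ and $\rho$, and checking the coefficients against the degenerate cases $\gamma=0$ and $|p_1|=2$. Since this lemma is quoted from \cite{kwon_adam_p1p2lemma, libera_p1p2lemma}, I would ultimately cite those sources for the full computation.
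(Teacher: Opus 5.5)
Your Schur-algorithm argument is correct. The paper gives no proof of this lemma, only the citation, so your derivation is a self-contained substitute rather than a reconstruction.

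For comparison, Libera--Z{\l}otkiewicz obtain the $p_2$ and $p_3$ formulas from the nonnegativity of the Carath\'eodory--Toeplitz determinants, rewritten as disk conditions first for $p_2$ and then for $p_3$. That route describes the coefficient region directly, but it becomes heavy at the fourth coefficient. Your route treats all three coefficients uniformly, extends mechanically to higher order, and shows exactly where $\overline{p_1}$ and $|p_1|^2$ enter.

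The step you defer does close. One more Schur step gives
\[
c_4=\frac{4-p_1^2}{4}\Bigl[(1-|\gamma|^2)\bigl((1-|\eta|^2)\rho-\overline{\gamma}\eta^2-p_1\gamma\eta\bigr)+\frac{p_1^2}{4}\gamma^3\Bigr].
\]
Substitute this into $p_4=2c_4+4c_1c_3+6c_1^2c_2+2c_2^2+2c_1^4$ and multiply by $8$. The term coming from $-p_1\gamma\eta$ in $2c_4$ combines with the $\eta$-term of $4c_1c_3$ to give the $p_1(\gamma-1)\eta$ entry of the bracket. The $\eta,\rho$-free part is $p_1^4+(4-p_1^2)\gamma\bigl(3p_1^2-2p_1^2\gamma+p_1^2\gamma^2+(4-p_1^2)\gamma\bigr)$. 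Writing $(4-p_1^2)\gamma=4\gamma-p_1^2\gamma$ turns this into the stated $p_1^4+(4-p_1^2)\gamma\bigl(p_1^2(\gamma^2-3\gamma+3)+4\gamma\bigr)$.

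Two points should be tightened.

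First, when some $|\gamma_k|=1$ (for instance $p_1=2$), the recursion cannot continue, since $\phi_k$ is then a unimodular constant. Say explicitly that the remaining parameters may be chosen arbitrarily, because every term containing them carries the vanishing factor $1-|\gamma_k|^2$.

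Second, $p_1\in[0,2]$ must be stated as a hypothesis of the lemma, as in the cited sources. You cannot reach it without loss of generality by rotating inside the lemma: rotation changes $p_2,p_3,p_4$ as well, and the formulas as printed actually fail for nonreal $p_1$. For example, $\omega(z)=z(z+i/4)/(1-iz/4)$ gives $p_1=i/2$, $p_2=7/4$, $p_3=11i/8$. The $p_2$ formula then forces $\gamma=15/17$, and the $p_3$ formula forces $\eta=15i/8$, which lies outside the closed disk. In applications the restriction $p_1\in[0,2]$ is justified by rotation invariance of the functional being estimated, when it has that property, not by the lemma itself.
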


\begin{lemma}\label{lem3}\cite{zaprawa2021,zaprawa2021thirdhankel}
Let $w \in \mathcal{H}$, are said to be schwarz function such that $w(0)=0$ and $|w(z)|<1$ for all $z \in \mathbb{D}$, and have the following series:
\begin{equation}
w(z)=\sum_{n=1}^{\infty}b_nz^n \label{schwarzfunc}
\end{equation}
Then, the following inequalities hold true
\begin{eqnarray}
|b_1|  &\leq & 1,\nonumber\\
|b_2| & \leq & 1-|b_1|^2,\nonumber \\
|b_3| &\leq &  1-|b_1|^2-\frac{|b_2|^2}{1+|b_1|},\nonumber\\
|b_4|&\leq&1-|b_1|^2-|b_2|^2.\nonumber
\end{eqnarray}
\end{lemma}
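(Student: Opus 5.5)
The plan is to run the Schur algorithm on $\omega(z):=w(z)/z=b_1+b_2z+b_3z^2+b_4z^3+\cdots$. By the Schwarz lemma $\omega$ maps $\mathbb{D}$ into $\overline{\mathbb{D}}$, and evaluating at $z=0$ gives $|b_1|\le 1$ at once. If $|b_1|=1$, then $\omega\equiv b_1$ is constant and every remaining inequality holds trivially, since each right-hand side is $\ge 0=|b_k|$ for $k\ge 2$. So assume $|b_1|<1$.

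For the second and third inequalities, form the first Schur transform
\[
\omega_1(z)=\frac{\omega(z)-b_1}{z\bigl(1-\overline{b_1}\,\omega(z)\bigr)},
\]
which by the Schwarz–Pick lemma again maps $\mathbb{D}$ into $\overline{\mathbb{D}}$. Expanding gives $\omega_1(0)=b_2/(1-|b_1|^2)$, so $|b_2|\le 1-|b_1|^2$. Next apply the Schwarz–Pick coefficient inequality $|\omega_1'(0)|\le 1-|\omega_1(0)|^2$. A direct expansion gives
\[
\omega_1'(0)=\frac{b_3(1-|b_1|^2)+\overline{b_1}\,b_2^2}{(1-|b_1|^2)^2},
\]
so
\[
|b_3|(1-|b_1|^2)\le (1-|b_1|^2)^2-|b_2|^2+|b_1||b_2|^2 .
\]
Dividing by $1-|b_1|^2=(1-|b_1|)(1+|b_1|)$ yields exactly $|b_3|\le 1-|b_1|^2-|b_2|^2/(1+|b_1|)$.

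For the fourth inequality the Schur recursion becomes messy, because the sharp form involves $b_3$ and the stated bound discards it. I would instead use the operator characterization: $\omega$ is bounded by $1$ if and only if multiplication by $\omega$ is a contraction on $H^2$. Hence the lower-triangular Toeplitz matrix $T$ built from $(b_1,b_2,b_3,b_4)$ satisfies $I-T^*T\ge 0$ and $I-TT^*\ge 0$. I would then choose a suitable unit test vector, for instance one supported on the coordinates pairing $b_4$ with $b_1,b_2$. I would combine the resulting quadratic inequality with $|b_1|^2+|b_2|^2+|b_3|^2+|b_4|^2\le 1$ (Parseval) and optimize the free phase and modulus of the test vector, aiming to isolate $|b_4|\le 1-|b_1|^2-|b_2|^2$. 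This is Carlson's classical inequality.

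The main obstacle is this last step: finding the right $2\times 2$ compression, or equivalently the right auxiliary function such as $\omega(z)(\alpha+\beta z)$, that eliminates $b_3$. If the direct matrix approach resists, the fallback is to write $b_4$ in terms of the Schur parameters $\gamma_0,\gamma_1,\gamma_2,\gamma_3$ (all in $\overline{\mathbb{D}}$) and maximize $|b_4|$ for fixed $b_1=\gamma_0$ and $b_2=(1-|\gamma_0|^2)\gamma_1$ over the remaining parameters. One then checks that the resulting maximum, a one-variable function of $|\gamma_2|$, never exceeds $1-|b_1|^2-|b_2|^2$.
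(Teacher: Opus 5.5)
\paragraph{The first three inequalities.} Your argument here is correct. The Schur transform gives $\omega_1(0)=b_2/(1-|b_1|^2)$, your formula for $\omega_1'(0)$ is right, and Schwarz--Pick followed by the triangle inequality yields exactly $|b_3|\le 1-|b_1|^2-|b_2|^2/(1+|b_1|)$. The paper gives no proof of this lemma; it quotes it from Zaprawa and Zaprawa--Obradovi\'c--Tuneski, and the bounds go back to Carlson.

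\paragraph{The gap: the fourth inequality.} You leave $|b_4|\le 1-|b_1|^2-|b_2|^2$ unproved, and the devices you name would not produce it. Write $\sigma=|b_1|^2+|b_2|^2$.
\begin{itemize}
\item The auxiliary function $\omega(z)(\alpha+\beta z)$ has $z^3$-coefficient $\alpha b_4+\beta b_3$, so it cannot separate $b_4$ from $b_3$.
\item Take the Toeplitz matrix $T$ with first column $(b_1,b_2,b_3,b_4)^T$ and two-coordinate test vectors that keep $b_3$ out of the entry containing $b_4$, such as $(1,0,0,t)^T$ or $(1,0,t,0)^T$. After optimizing over $t$ they give only $|b_4|\le\sqrt{(1-\sigma)(1-|b_j|^2)}$ for $j=1,2$.
\item Parseval gives only $|b_4|\le\sqrt{1-\sigma}$.
\end{itemize}
The best of these bounds still exceeds $1-\sigma$ whenever $b_1b_2\neq0$ and $\sigma<1$.

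Your Schur-parameter fallback is not a routine one-variable check either. Rotate so that $\gamma_0=x\ge0$ and $\gamma_1=y\ge0$. Then
\[
b_4=(1-x^2)\bigl[(1-y^2)(1-|\gamma_2|^2)\gamma_3-y(1-y^2)\gamma_2^2-2xy(1-y^2)\gamma_2+x^2y^3\bigr],
\qquad
1-\sigma=(1-x^2)(1-y^2+x^2y^2).
\]
Bounding $|b_4|$ by the sum of the moduli of the four terms already overshoots $1-\sigma$ at $x=y=|\gamma_2|=\tfrac12$. So the phase of $\gamma_2$ has to be optimized genuinely, and you have not done this.

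\paragraph{The missing idea.} You need a three-term test polynomial that couples $b_4$ with $b_1$ and $b_2$ at once, while confining $b_3$ to a coefficient you may discard. For $|\lambda|=1$ put
\[
p(z)=1+\lambda\bigl(\overline{b_2}\,z^2+\overline{b_1}\,z^3\bigr),
\]
that is, $v=(1,0,\lambda\overline{b_2},\lambda\overline{b_1})^T$ for your contraction $T$. The first four coefficients of $\omega p$ are
\[
b_1,\quad b_2,\quad b_3+\lambda b_1\overline{b_2},\quad b_4+\lambda\sigma .
\]
Since $\|\omega p\|_{H^2}\le\|p\|_{H^2}$, dropping the $z^2$-coefficient and all higher ones gives
\[
\sigma+|b_4+\lambda\sigma|^2\le 1+\sigma .
\]
Choose $\lambda$ with $\overline{\lambda}\,b_4\ge 0$. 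Then $(|b_4|+\sigma)^2\le1$, that is, $|b_4|\le 1-|b_1|^2-|b_2|^2$. Three coordinates are essential here, and no appeal to Parseval is needed: scaling the $\lambda$-part of $p$ by $r>0$ gives $|b_4|\le\sqrt{1-\sigma+r^2\sigma}-r\sigma$, which is minimized at $r=1$.
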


\section*{Geometric properties of \texorpdfstring{$\boldsymbol{B(z)=1/(1-\log(1+z))}$}{1/(1-\log(1+z)}}
The function $B(z)=1/(1-\log(1+z)$ is starlike in $\mathbb{D}$, since 
$\Re(z B'(z)/B(z))>0$ for $|z|\in \mathbb{D}$, and it is symmetric about the real axis as 
$B(\overline{z})=\overline{B(z)}$. Our first result aims in finding the
radius of convexity of the function $B(z)$.

\begin{theorem}
    The radius of convexity for the function $B(z)$ is given by \[r_c=1-1/e \approx 0.63212.\]
\end{theorem}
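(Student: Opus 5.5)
The plan is to compute the quantity $1+zB''(z)/B'(z)$ in closed form and then find the largest $r$ for which its real part stays positive on $|z|<r$. Write $L(z)=\log(1+z)$, so $B=1/(1-L)$. Logarithmic differentiation of
\[
B'(z)=\frac{1}{(1+z)(1-L(z))^{2}}
\]
gives
\[
1+\frac{zB''(z)}{B'(z)}=1+\frac{z\,(1+L(z))}{(1+z)\,(1-L(z))}=:Q(z).
\]
We note two facts about $Q$. First, $B'$ never vanishes. Second, the only singularity of $Q$, namely $1-L(z)=0$, occurs at $z=e-1$, outside $\overline{\mathbb{D}}$. Hence $Q$ is analytic in $\mathbb{D}$ with $Q(0)=1$. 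By the minimum principle for harmonic functions, the radius of convexity is the first $r$ at which $\min_{|z|=r}\Re Q(z)$ reaches $0$.

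The next step is to restrict to the real diameter, where the expression simplifies. For $z=-r$ we substitute $t=-\log(1-r)$, so $r=1-e^{-t}$. A direct computation then gives
\[
Q(-r)=\frac{2e^{-t}+t-1}{e^{-t}(1+t)} .
\]
On the positive axis $Q(r)>1$ trivially. The distinguished value $r=1-1/e$ corresponds to $t=1$. Equivalently, it is the point $z_0=1/e-1$ where $1+L(z_0)=0$, so that the correction term in $Q$ vanishes and $Q(z_0)=1$.

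The real-axis check shows that $Q$ does not vanish on $(-1,1)$, so the extremal point on $|z|=r$ must be nonreal. The main step, and the one I expect to be the real obstacle, is to write $z=re^{i\theta}$ and study
\[
h(r,\theta)=\Re Q(re^{i\theta}),\qquad L=\tfrac12\log(1+2r\cos\theta+r^{2})+i\arg(1+re^{i\theta}).
\]
The goal is to show that $h(r,\theta)>0$ for all $\theta$ when $r<1-1/e$, and that $h$ attains $0$ for some $\theta$ when $r=1-1/e$. To do this I would put $\Re Q$ over the common denominator $|1+z|^{2}|1-L|^{2}$ and reduce positivity to the sign of
\[
N(r,\theta)=|1+z|^{2}|1-L|^{2}+\Re\big[z(1+L)\,\overline{(1+z)(1-L)}\big].
\]
I would then minimise $N$ in $\theta$, first numerically to locate the critical angle and then analytically, using the substitution $1+z=\rho e^{i\phi}$. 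In these coordinates $L=\log\rho+i\phi$ becomes linear, and $N$ becomes an explicit function of $(\rho,\phi)$ on the image circle.

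If this minimisation does not produce a boundary zero exactly at $r=1-1/e$, I would revisit the computation of $Q$ and the claimed value of $r_c$. The whole argument hinges on this two-variable minimisation, and the real-axis analysis alone does not confirm the stated radius.
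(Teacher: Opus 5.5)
\textbf{Gap: the decisive step is not carried out, and it would fail because the stated radius is wrong.}

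Your set-up is correct. Your $Q(z)=\frac{1+2z-\log(1+z)}{(1+z)(1-\log(1+z))}$ agrees with the paper's formula. The reduction to the first $r$ at which $\min_{|z|=r}\Re Q$ vanishes is right, and your formula for $Q(-r)$ shows $\Re Q>0$ on the whole diameter $(-1,1)$. But the proposal stops at the step that carries the theorem: showing that $\Re Q$ hits $0$ on $|z|=1-1/e$. That step cannot be completed, because $\Re Q$ has no zero there.

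Carrying out your substitution $1+z=\rho e^{i\phi}$ gives $N=\rho\,F(\rho,\phi)$ with
\[
F(\rho,\phi)=\bigl((\log\rho)^2-1+\phi^2\bigr)\cos\phi-2\phi\sin\phi+2\rho(1-\log\rho),\qquad
\frac{\partial F}{\partial\phi}=-\bigl(1+(\log\rho)^2+\phi^2\bigr)\sin\phi .
\]
For $|z|\le 1-1/e$ one has $\rho\in[1/e,\,2-1/e]$ and $|\phi|\le\arcsin(1-1/e)\approx0.684$.
\begin{itemize}
\item Drop $(\log\rho)^2\cos\phi\ge 0$. Since $(\phi^2-1)\cos\phi-2\phi\sin\phi$ decreases in $|\phi|$, the remaining $\phi$-terms are at least $-1.278$.
\item On that $\rho$-interval, $2\rho(1-\log\rho)\ge 4/e\approx1.472$.
\end{itemize}
Hence $F>0.19$ on the closed disk $|z|\le 1-1/e$, so $B$ is convex on a strictly larger disk. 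For instance, $\Re Q\bigl((1-1/e)e^{2i}\bigr)\approx0.503$. The number $1-1/e$ is exactly where $1+\log(1+z)$, and hence $B''$, vanishes on the negative axis; this is your observation $Q(z_0)=1$. It has nothing to do with $\Re Q=0$.

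The same formulas locate the true radius. For each $\rho\in(0,e)$ we have $F(\rho,0)>0>2-\pi\ge F(\rho,\pi/2)$, and $F$ is strictly decreasing in $\phi$ on $(0,\pi/2)$. So there is a unique $\phi^*(\rho)$ with $F(\rho,\phi^*(\rho))=0$, and $r_c=\min_\rho|\rho e^{i\phi^*(\rho)}-1|$. Numerically the minimum occurs near $\rho\approx0.44$, $\phi^*\approx1.1165$, i.e.\ at $z\approx-0.807\pm0.395i$ ($\theta\approx\pm2.69$). This gives $r_c\approx0.8986$.

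So your closing caveat is the correct conclusion: the statement, read as the radius of convexity, is false, and no completion of the minimisation can produce $1-1/e$. The paper's proof does not rescue it:
\begin{itemize}
\item It works only at $\theta=\pi$, which your formula for $Q(-r)$ already shows is never where $\Re Q$ vanishes.
\item The value $(\log(1-r))^2-1$ it uses there is not its own $g_n(r,\pi)=(1-r)^2(1-\log(1-r))^2$. Nor is it the numerator of $\Re Q(-r)$, which up to the factor $1-r$ is $(\log(1-r))^2-1+2(1-r)(1-\log(1-r))$.
\item Finally, $(\log(1-r))^2>1$ is equivalent to $r>1-1/e$, not $r<1-1/e$.
\end{itemize}
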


\begin{proof}
    To find the radius of convexity of the function $B(z)$, we have to find $r_c$ such that
    \begin{equation}
        \Re \left(1+\dfrac{zB''(z)}{B'(z)}\right)>0, \; |z|<r_c. \label{convexradius}
    \end{equation}
    Since 
    \begin{equation}
        1+\dfrac{zB''(z)}{B'(z)}=\dfrac{2z+1-\log(1+z)}{(1+z)(1-\log(1+z))}. \nonumber
    \end{equation}
    Taking $z=re^{\iota \theta}$, then we get
    \begin{equation}
        \Re \left(1+\dfrac{zB''(z)}{B'(z)}\right)=1+r\dfrac{\left(\cos \theta\left(1-\tfrac{1}{4}(\log(X))^2\right)+\left(\arctan(Y)\right)^2-\log(X) \arctan(Y) \sin\theta\right)}{X\left(\left(1-\tfrac{1}{2}\log(X)\right)^2+(\arctan(Y))^2\right)}, \nonumber
    \end{equation}
    where
    \begin{equation}
        X=1+2r\cos \theta+r^2, \quad Y=\dfrac{r\sin \theta}{1+r \cos \theta}. \nonumber
    \end{equation}
    Now take \[\Re \left(1+\dfrac{zB''(z)}{B'(z)}\right):=g(r,\theta)=\dfrac{g_n(r, \theta)}{g_m(r,\theta)}.\]
    Since $g(r, \theta)=g(r,-\theta)$, the function $g(r,\theta)$ is symmetric about real axis. Hence, it suffices to consider $\theta\in[0,\pi]$. Further, we observe that for $r\in(0,1)$ and $\theta\in[0,\pi]$,
    \begin{equation}
        g_n(r, \theta):=(1+2r\cos \theta+r^2)\left(\left(1-\tfrac{1}{2}\log(1+2r\cos \theta+r^2)\right)^2+\left(\arctan\left(\dfrac{r\sin \theta}{1+r \cos \theta}\right)\right)^2\right)>0. \nonumber
    \end{equation}
    A mathematical computation shows that $g_n(r,\theta)$ attains its minimum at $\theta=\pi$, hence $g_n(r,\pi)=((\log(1-r))^2-1)$. Now we have to show that $g_n(r,\theta)>0$, i.e.,
    \begin{equation}
        ((\log(1-r))^2-1)>0. \nonumber
    \end{equation}
    Since $r>0$, $\log(1-r)>1$ is not possible. Hence, we get 
    \begin{equation}
        \log(1-r)>-1 \implies r<r_c=1-1/e \approx0.63212. \nonumber
    \end{equation}
    Therefore, \eqref{convexradius} holds for $r_c\approx0.63212.$ This completes the proof.
\end{proof}

\begin{lemma}\textbf{(Function Bounds)}
    Let $B_R(\theta)$ and $B_I(\theta)$ denote the real and imaginary part of the function $B(e^{\iota \theta})$ respectively and given as 
    \begin{equation}
        B_R(\theta)=\dfrac{1-\log(2\cos{\theta/2})}{(1-\log(2\cos{\theta/2}))^2+(\theta/2)^2}, \quad \text{and} \quad
        B_I(\theta)=\dfrac{\theta/2}{(1-\log(2\cos{\theta/2}))^2+(\theta/2)^2}. \label{realandimaginarypartofb(z)}
    \end{equation}
    Then we have
    \begin{enumerate}[label=(\roman*)]
        \item $0<\Re(B(z))<1/(1-\log2)\approx3.25889$ at $\theta=0$ and $\pi$, we get maximum and minimum of $B_R(\theta)$ respectively.  
        \item $|\Im(B(z))|\leq B_I(\theta_0)\approx 1.41379$ at $\theta_0=0.524085.$
        \item $|\arg(B(z))|\leq\left|\arctan\left(\tfrac{\theta_0/2}{1-\log(2\cos{\theta_0/2})}\right)\right|\approx0.88329$ at $\theta_0=1.37501.$
        \item $|B(z)|\leq \left|\dfrac{1}{1-\log(1+r)} \right|,$ where $|z|=r<1.$
    \end{enumerate}
    These bounds are best possible.
\end{lemma}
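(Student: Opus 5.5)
By the maximum (and minimum) principle for harmonic functions, the extrema of $\Re B$, $\Im B$ and $\arg B$ over $\mathbb{D}$ are attained on the boundary $|z|=1$. So the first step is to compute $B(e^{\iota\theta})$ for $\theta\in(-\pi,\pi)$. Writing $1+e^{\iota\theta}=2\cos(\theta/2)e^{\iota\theta/2}$ gives $\log(1+e^{\iota\theta})=\log(2\cos(\theta/2))+\iota\theta/2$. Hence
\[
B(e^{\iota\theta})=\frac{1}{u(\theta)-\iota\theta/2},\qquad u(\theta):=1-\log(2\cos(\theta/2)).
\]
Multiplying by the conjugate yields exactly the formulas \eqref{realandimaginarypartofb(z)} for $B_R$ and $B_I$. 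Because $B(\overline z)=\overline{B(z)}$, both $B_R$ (even) and $B_I$ (odd) need only be studied on $[0,\pi)$.

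\textbf{Part (i).} Positivity of $\Re B$ follows once we show $u(\theta)>0$ on the whole range, i.e.\ $2\cos(\theta/2)<e$. This is automatic, since $2\cos(\theta/2)\le 2$. It also gives the lower bound: as $\theta\to\pi^-$, $u\to+\infty$ logarithmically, so $B_R\sim 1/u\to 0$ and $B_R>0$ with infimum $0$. For the upper bound I would write $B_R=u/(u^2+\theta^2/4)\le 1/u$. Since $u$ is increasing in $\theta$, we have $1/u\le 1/u(0)=1/(1-\log 2)$, with equality at $\theta=0$, where $B(1)=1/(1-\log2)$. Strictly speaking this needs the radial limit; it is attained as $z\to1$ and is therefore best possible.

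\textbf{Parts (ii) and (iii).} For (ii), differentiate $B_I(\theta)=(\theta/2)/(u^2+\theta^2/4)$ using $u'(\theta)=\tfrac12\tan(\theta/2)$. The critical point equation becomes
\[
u^2+\frac{\theta^2}{4}=\theta\,u\,\tan(\theta/2)+\frac{\theta^2}{2},
\]
i.e.\ $u^2-\theta^2/4=\theta u\tan(\theta/2)$. I would locate its unique root in $(0,\pi)$ numerically ($\theta_0\approx0.524$) and evaluate $B_I(\theta_0)\approx1.41379$. At the endpoints $B_I(0)=0$ and $B_I\to0$ as $\theta\to\pi$, which confirms this is the global maximum. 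For (iii), $\arg B(e^{\iota\theta})=\arctan\bigl((\theta/2)/u(\theta)\bigr)$, so one maximizes $h(\theta)=\theta/(2u(\theta))$. The condition $h'=0$ reads $u=\tfrac{\theta}{2}\tan(\theta/2)$, again solved numerically ($\theta_0\approx1.375$), after which one evaluates the arctangent. Since $\arg B$ is harmonic (note $B\neq0$ in $\mathbb{D}$), the boundary maximum controls the interior.

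\textbf{Part (iv).} I would use $|B(z)|=1/|1-\log(1+z)|$ together with the lower bound
\[
|1-\log(1+z)|\ge \Re(1-\log(1+z))=1-\log|1+z|\ge 1-\log(1+r).
\]
Equality holds at $z=r$, so this bound is sharp. Note that $1-\log(1+r)>0$ for $r<1$, so the bound is meaningful.

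\textbf{Main obstacle.} The hardest step is certifying uniqueness of the critical points in (ii) and (iii). Here I would try to show that $u^2-\theta^2/4-\theta u\tan(\theta/2)$, and respectively $u-\tfrac{\theta}{2}\tan(\theta/2)$, are strictly decreasing on $(0,\pi)$. This is plausible because in each case the subtracted term increases while $u$ grows only logarithmically. Failing a clean proof, I would fall back on a sign analysis on subintervals.
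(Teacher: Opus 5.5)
Your proposal is correct. There is no proof in the paper to compare it with: the paper states this lemma without proof and only records the numerical values of $\theta_0$ and of the extremal quantities. Your argument therefore supplies the justification the paper omits, and the plan (boundary parametrization, maximum principle, one-variable calculus) works as written.

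The step you flag as the main obstacle is immediate. Put $s=\theta/2$ and $u(s)=1-\log(2\cos s)$, so that $u'(s)=\tan s$. On $(0,\pi/2)$ one gets
\[
\frac{d}{ds}\bigl(u^2-s^2-2su\tan s\bigr)=-2s(1+u)\sec^2 s<0,\qquad \frac{d}{ds}\bigl(u-s\tan s\bigr)=-s\sec^2 s<0 .
\]
Both expressions are positive at $s=0$, with values $(1-\log 2)^2$ and $1-\log 2$. Both tend to $-\infty$ as $s\to\pi/2^-$, because $s\tan s$ dominates $-\log\cos s$. Hence each critical-point equation has exactly one root. $B_I$ and $\theta/(2u)$ increase before that root and decrease after it, so the roots are global maxima, located numerically at $\theta_0\approx0.524085$ and $\theta_0\approx1.37501$.

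You should also say explicitly why the maximum principle applies to $\Im B$ and $\arg B$, since the boundary formula misses $z=-1$. As $z\to-1$ in $\overline{\mathbb{D}}$ one has $B(z)\to0$, and
\[
\arg B(z)=\arctan\bigl(\arg(1+z)/(1-\log|1+z|)\bigr)\to0 .
\]
So both harmonic functions extend continuously to $\overline{\mathbb{D}}$. Alternatively, both are bounded (since $\Re B>0$), and a single exceptional boundary point does no harm.

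Part (i) can be done without boundary analysis, in the same spirit as your (iv). Set $w=1-\log(1+z)$. Then $\Re w=1-\log|1+z|>1-\log 2>0$, so
\[
0<\Re(1/w)=\Re w/|w|^2\le 1/\Re w<1/(1-\log 2).
\]
Sharpness comes from $z\to1$ and $z\to-1$.
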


We now study the geometric extent of the image domain $B(\mathbb{D})$ via disk containment. In particular, we determine the largest disk contained in $B(\mathbb{D})$ and the smallest disk containing $B(\mathbb{D})$.

\begin{theorem}
    For $B(z)=1/(1-\log(1+z))$, then we have
    \begin{enumerate}
        \item
    $\mathbb{D}_L:=\{w:\;|w-a|<r_a\} \subset B(\mathbb{D})$, where
    
    \[r_a= \displaystyle\left\{\begin{array}{ll}
        \sqrt{d(\theta_a)}, & 0<a<\tfrac{2-\log2}{(1-\log2)(3-\log2)}, \\ \\
        \tfrac{1}{1-\log2}-a, & \tfrac{2-\log2}{(1-\log2)(3-\log2)}\leq a<\tfrac{1}{1-\log2}.
    \end{array}\right. \] 
    and $\theta_a \in (0,\pi)$, is the root of the given equation
    \begin{equation}
    \sin{B}(a(A^2-B^2)-A)+B\cos{B}(2aA-1)=0, \label{AandB}
    \end{equation}
    with $A:=1/(1-\log{(2\cos(\theta/2))}),\;B:=\theta/2.$\\

    \item $B(\mathbb{D}) \subset\{w:\;|w-a|<R_a\}=:\mathbb{D}_S$, where
    \[R_a= \displaystyle\left\{\begin{array}{ll}
        \tfrac{1}{1-\log2}-a,&  0 <a<\tfrac{1}{2(1-\log2)}, \\ \\
        a, & \tfrac{1}{2(1-\log2)}\leq a<\tfrac{1}{1-\log2}.
    \end{array}\right. \]

    \end{enumerate}

\end{theorem}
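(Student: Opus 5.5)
Since $B$ is univalent on $\overline{\mathbb{D}}\setminus\{-1\}$ and $B(\mathbb{D})$ is a bounded Jordan domain symmetric about the real axis, both containments reduce to extremizing the squared distance from the real point $a$ to the boundary curve $\partial B(\mathbb{D})=\{B(e^{\iota\theta}):\theta\in(-\pi,\pi]\}$, with the value $0$ attached to $\theta=\pi$.

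Writing $e^{\iota\theta}=1+\cdots$, we have $\log(1+e^{\iota\theta})=\log(2\cos(\theta/2))+\iota\theta/2$. Hence, in the notation $A=1/(1-\log(2\cos(\theta/2)))$ and $B=\theta/2$ of the statement, $1/B(e^{\iota\theta})=1/A-\iota B$. The boundary point is therefore $w(\theta)=A/(1-\iota AB)$, which agrees with \eqref{realandimaginarypartofb(z)}. By symmetry it suffices to take $\theta\in[0,\pi)$ and define
\[
d(\theta)=|w(\theta)-a|^2=(B_R(\theta)-a)^2+B_I(\theta)^2 .
\]
Then the largest disk centered at $a$ inside $B(\mathbb{D})$ has radius $r_a=\min_\theta\sqrt{d(\theta)}$, and the smallest disk centered at $a$ containing $B(\mathbb{D})$ has radius $R_a=\max_\theta\sqrt{d(\theta)}$. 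These identifications use that $a\in(0,1/(1-\log2))$ lies in $B(\mathbb{D})$, which is the real segment $B((-1,1))$.

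\textbf{Part (2).} The rightmost boundary point is $B(1)=1/(1-\log2)$ and the leftmost is $0$ (as $\theta\to\pi$). Both lie on the real axis, and by part (i) of the preceding lemma $0<\Re B<1/(1-\log2)$. I would then show $\max_\theta|w(\theta)-a|=\max\{a,\,1/(1-\log2)-a\}$. One route is to check that $B(\mathbb{D})$ lies in the disk with diameter $[0,1/(1-\log2)]$, i.e.\ $|w-c|\le c$ with $c=1/(2(1-\log2))$. Then for $a\le c$ the farthest point is $1/(1-\log2)$, and for $a\ge c$ it is $0$.

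Checking $|w-c|\le c$ is equivalent to $\Re(1/w)\ge 1/(2c)$, that is, $1/A=1-\log(2\cos(\theta/2))\ge 1-\log 2$. This holds since $\cos(\theta/2)\le1$, with equality only at $\theta=0$. Thus the whole image lies in the disk $|w-c|<c$ and touches its boundary only at $0$ and $1/(1-\log2)$. The case split in $R_a$ follows, and sharpness comes from these two boundary points.

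\textbf{Part (1).} I would differentiate $d(\theta)$. Using $w=A/(1-\iota AB)$, the condition $d'(\theta)=0$ is equivalent to $\Re\big((\overline{w}-a)w'\big)=0$. Computing $w'$ via $A'=A^2\cdot(-\tfrac12\tan(\theta/2))$ and $B'=1/2$, and clearing the positive denominators, should produce the trigonometric form \eqref{AandB}. Here $\sin B$ and $\cos B$ enter because $\tan(\theta/2)=\tan B$.

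The endpoint $\theta=0$ gives $d(0)=(1/(1-\log2)-a)^2$. Expanding $d$ near $0$ to second order, $d(\theta)=d(0)+\kappa(a)\theta^2+O(\theta^4)$, the sign of $\kappa(a)$ changes exactly at $a_0=(2-\log2)/((1-\log2)(3-\log2))$. This value is the center of curvature of $\partial B(\mathbb{D})$ at $B(1)$, i.e.\ $B(1)$ minus the radius of curvature there. So for $a\ge a_0$ the point $\theta=0$ is the minimizer and $r_a=1/(1-\log2)-a$. For $a<a_0$ the minimum moves to an interior critical point $\theta_a\in(0,\pi)$ solving \eqref{AandB}, and $r_a=\sqrt{d(\theta_a)}$.

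\textbf{Main obstacle.} The hard step is the global claim in part (1): that for $a<a_0$ there is a unique interior critical point, which is the global minimum, and that for $a\ge a_0$ no interior point beats $\theta=0$. The comparison with $\theta\to\pi$, where $d\to a^2$, is also needed. I would first try to show that $\theta\mapsto$ (curvature of the boundary) is monotone on $(0,\pi)$, so that the evolute argument gives uniqueness. Failing that, I would fall back on a careful sign analysis of the left side of \eqref{AandB} as a function of $\theta$ for fixed $a$, supported by numerical verification, as the paper does elsewhere.
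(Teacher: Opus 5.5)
Your plan is correct. For part (1) it follows the paper's route. The paper also works with the squared distance $d(\theta)$ and finds that the numerator of $d'(\theta)$ is the left side of \eqref{AandB}. It then simply asserts (``a mathematical computation shows'') that $d'$ changes sign once, at $\theta_a$, when $a<a_0$, and that $d'\ge 0$ on $(0,\pi)$ when $a\ge a_0$. So the global step you single out as the main obstacle is not proved in the paper either, and your numerical fallback matches its level of rigor. Your identification of $a_0$ as the center of curvature at $B(1)$ is correct: the radius of curvature there is $1/((1-\log 2)(3-\log 2))$. This explains where the threshold comes from, which the paper does not.

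For part (2) your route is genuinely different and better. The paper only asserts $\sup_\theta d(\theta)=d(0)$ for $a<c$ and $d(\theta)\le a^2$ for $a\ge c$. Your identity $\Re\bigl(1/B(z)\bigr)=1-\log|1+z|>1-\log 2$ places $B(\mathbb{D})$ inside the disk $|w-c|<c$, with $c=1/(2(1-\log 2))$. The boundary circle of that disk meets $\overline{B(\mathbb{D})}$ exactly at $0$ and $2c$. This gives both cases of $R_a$, and their sharpness, in one stroke.

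Two caveats.

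First, the statement's $A=1/(1-\log(2\cos(\theta/2)))$ is inverted relative to what the paper's proof actually uses. Its formula $d(\theta)=a^2+(1-2aA)/(A^2+B^2)$ and equation \eqref{AandB} are correct for $A=\widetilde A:=1-\log(2\cos(\theta/2))$. With your $A$, the derivative computation yields $\sin B\,\bigl(a(1-A^2B^2)-A\bigr)+B\cos B\,(2aA-A^2)=0$. That is \eqref{AandB} with $A$ replaced by $1/A$, so do not expect to reproduce \eqref{AandB} literally.

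Second, on the obstacle itself, \eqref{AandB} is linear in $a$. Let $\phi$ be the tangent angle and $\alpha(\theta)=\Re w+\Im w\,\tan\phi$ the real-axis intercept of the normal at $w(\theta)$. Then
$\tfrac12\,\tfrac{d}{ds}|w-a|^2=\cos\phi\,(\alpha(\theta)-a)$,
so the critical points are exactly the solutions of $\alpha(\theta)=a$. Since $\alpha(0^+)=a_0$ and $\alpha(\pi^-)=0$, the paper's two sign claims amount to $\alpha$ being strictly decreasing on $(0,\pi)$.

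Monotone curvature by itself is not quite enough to get this. You also need $\cos\phi<0$ on $(0,\pi)$, i.e.\ $\Re B(e^{\iota\theta})$ strictly decreasing there. Given that, the argument closes in two pieces:
on the arc where $\kappa>0$, the evolute $e=w+\rho N$ satisfies $\Im e'=\rho'\cos\phi<0$, so it stays strictly below the axis, and hence $\alpha'=\kappa\sec^2\phi\,\Im e<0$;
where $\kappa\le 0$, one has $\alpha'=\sec^2\phi\,(\cos\phi+\kappa\,\Im w)<0$ directly.
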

    
\begin{proof}
    Let $B(e^{\iota \theta})=B_R(\theta)+\iota B_I(\theta)$, where $B_R(\theta)$ and $B_I(\theta)$ are defined in \eqref{realandimaginarypartofb(z)}, represents the boundary of $B(\mathbb{D})$ and is symmmetric about real axis. Take $d(\theta)$ denote the square of the distance of $(a,0)$ from the points on the curve $B(e^{\iota \theta})$ is given by
    \[
    d(\theta):=a^2+\dfrac{1-2aA}{A^2+B^2},
    \]
    where $A$ and $B$ are given in \eqref{AandB}.
    \begin{enumerate}
        \item 
    We'll start with $\{w:\;|w-a|<r_a\} \subset B(\mathbb{D})$. For $0<a<\tfrac{2-\log2}{(1-\log2)(3-\log2)}$, 
        \[
        d'(\theta)=\dfrac{\sin{B}(a(A^2-B^2)-A)+B\cos{B}(2aA-1)}{\cos{B}(A^2+B^2)^2}=0,
        \]
        A mathematical computation shows that $d'(\theta)$ changes sign exactly at $\theta_a$, where $\theta_a$ is the root of the equation \[\sin{B}(a(A^2-B^2)-A)+B\cos{B}(2aA-1)=0.\]
        Since $d'(\theta)<0$ for $\theta \in (0,\theta_a)$ and $d'(\theta)>0$ for $\theta \in (\theta_a,\pi)$, we get $d(\theta)$ is decreasing in $(0,\theta_a)$ and increasing in $(\theta_a,\pi)$. For the largest disk inside $B(\mathbb{D})$, we need $r_a=\min_{\theta\in [0,\pi]}\sqrt{d(\theta)}$. Therefore
        \[
        \min_{\theta \in [0,\pi]}d(\theta)=d(\theta_a) \implies r_a=\sqrt{d(\theta_a)}.
        \]
        For $\tfrac{2-\log2}{(1-\log2)(3-\log2)}\leq a<\tfrac{1}{1-\log2}$, $d'(\theta)\ge 0$ for all $\theta \in (0,\pi)$, hence $d(\theta)$ is monotonic increasing. Therefore, we get 
        \[
        \min_{\theta\in [0,\pi]}d(\theta)=d(0)=\left(\dfrac{1}{1-\log2}-a\right)^2 \implies r_a=\dfrac{1}{1-\log2}-a.
        \]
    
        \item For $B(\mathbb{D}) \subset \{w:\;|w-a|<R_a\}$, we need to find the smallest disk that contain $B(\mathbb{D})$, we need $R_a=\sup_{\theta\in [0,\pi]}\sqrt{d(\theta)}$. For $0<a<\tfrac{1}{2(1-\log2)}$, the supremum of $d(\theta)$ over $[0,\pi]$ is 
        \[
        \sup_{\theta\in[0,\pi]}d(\theta)=d(0)=\left(\dfrac{1}{1-\log2}-a\right)^2 \implies R_a=\dfrac{1}{1-\log2}-a.
        \]
        For $\tfrac{1}{2(1-\log2)}\leq a<\tfrac{1}{1-\log2}$, $d(\theta)\leq a^2$ for all $\theta \in (0,\pi)$. Hence $R_a=a.$
        \end{enumerate}
        This completes the proof.
        
\end{proof}

\begin{remark}
            The largest disk $\mathbb{D}_L:=\{w:\;|w-a_1|<r_{a_1}\}$ contained in $B(\mathbb{D})$ is obtained when $a_1\approx1.90$ and $r_{a_1}=1/(1-\log2) - 1.90\approx 1.3589$, such that $\mathbb{D}_L \subset B(\mathbb{D})$. And the smallest disk $\mathbb{D}_S:=\{w:\;|w-a_2|<R_{a_2}\}$ containing $B(\mathbb{D})$, is obtained when $a_2\approx1.6295$ and $R_{a_2}\approx1.6295$, such that $B(\mathbb{D}) \subset\mathbb{D}_S$ as shown in Figure \ref{fig:diskcontain}. 
        \end{remark}
    \begin{figure}[H]
        \centering
    \includegraphics[width=0.5\textwidth]{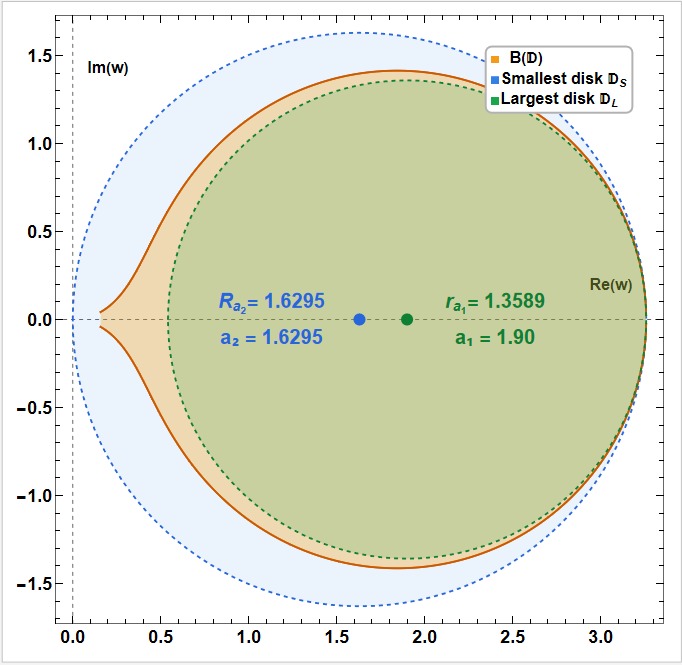}
    \caption{Visualization of the image $B(\mathbb{D})$ under the mapping 
$B(z)=1/(1-\log(1+z))$, together with the largest inscribed disk
and the smallest disk containing $B(\mathbb{D})$. The parameters $a_1$ and $a_2$ denote 
the centers of the respective disks on the real axis.}
    \label{fig:diskcontain}
    \end{figure}

\section*{Zalcman Conjecture}
Zalcman proposed a conjecture in 1960 for functions in the class $\mathcal{S}$, stating that
\[|Z_n(f)| := |a_n^2 - a_{2n-1}| \le (n - 1)^2, \quad n \ge 2,\]
with equality holds for the Koebe function $k(z) = z/(1 - z)^2$ and its rotations. 
We first estimate the Zalcman inequality for $n = 2$.
\begin{theorem}
    Let $f \in \mathcal{S}^*_{B}$, then 
        \begin{equation}
            |a_2^2-a_3|\leq \frac{1}{2}. \nonumber
        \end{equation}
    This inequality is sharp.
\end{theorem}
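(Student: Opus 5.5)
Since $f\in\mathcal{S}^*_{B}$, there is a Schwarz function $w$ with $zf'(z)/f(z)=B(w(z))$. We pass to the Carath\'eodory class by writing $w(z)=(p(z)-1)/(p(z)+1)$ with $p\in\mathcal{P}$ of the form \eqref{pp}. We then expand both sides and compare coefficients of $z$ and $z^2$.

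First we need the expansion of $B$. We have $\log(1+z)=z-z^2/2+\cdots$, so
\[
B(z)=1+L+L^2+\cdots \quad\text{with } L=z-\tfrac{z^2}{2}+\cdots .
\]
Hence $B(z)=1+z+\tfrac12 z^2+\cdots$, where the $z^2$ coefficient is $-\tfrac12+1$. Next,
\[
w(z)=\tfrac{p_1}{2}z+\Bigl(\tfrac{p_2}{2}-\tfrac{p_1^2}{4}\Bigr)z^2+\cdots ,
\]
so
\[
B(w(z))=1+\tfrac{p_1}{2}z+\Bigl(\tfrac{p_2}{2}-\tfrac{p_1^2}{4}+\tfrac{p_1^2}{8}\Bigr)z^2+\cdots
=1+\tfrac{p_1}{2}z+\Bigl(\tfrac{p_2}{2}-\tfrac{p_1^2}{8}\Bigr)z^2+\cdots .
\]
On the other side, $zf'/f=1+a_2z+(2a_3-a_2^2)z^2+\cdots$. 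Comparing coefficients gives $a_2=p_1/2$ and
\[
a_3=\tfrac12\Bigl(\tfrac{p_2}{2}-\tfrac{p_1^2}{8}+\tfrac{p_1^2}{4}\Bigr)=\tfrac{p_2}{4}+\tfrac{p_1^2}{16}.
\]
Therefore
\[
a_2^2-a_3=\tfrac{p_1^2}{4}-\tfrac{p_2}{4}-\tfrac{p_1^2}{16}=-\tfrac14\Bigl(p_2-\tfrac34 p_1^2\Bigr).
\]

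Now we apply \Cref{lem1} with $n=k=1$ and $\mu=3/4\in[0,1]$. This gives $|p_2-\tfrac34p_1^2|\le 2$, and hence $|a_2^2-a_3|\le \tfrac12$.

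For sharpness, take $w(z)=z^2$, i.e.\ $p(z)=(1+z^2)/(1-z^2)$, so that $p_1=0$ and $p_2=2$. The extremal function is defined by $zf'(z)/f(z)=B(z^2)$, that is,
\[
f(z)=z\exp\int_0^z\frac{B(t^2)-1}{t}\,dt=z+\tfrac12 z^3+\cdots .
\]
For this $f$ we have $a_2=0$ and $a_3=1/2$, so equality holds.

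The only step needing care is the coefficient bookkeeping in the expansion of $B(w(z))$: the value $\mu=3/4$ must land in $[0,1]$, and it does. Beyond that the proof is routine.
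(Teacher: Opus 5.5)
Your proof is correct and matches the paper's argument: the same substitution $w=(p-1)/(p+1)$ yields $a_2=p_1/2$ and $a_3=(p_1^2+4p_2)/16$, then Lemma~\ref{lem1} is applied with $\mu=3/4$. Sharpness uses the same extremal function defined by $zf'(z)/f(z)=B(z^2)$, and your coefficient computations all check out.
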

\begin{proof}
    Let $f \in \mathcal{S}^*_{B}$. Then we can find a Schwarz function $w(z)$ such that
        \begin{equation}
            \dfrac{zf'(z)}{f(z)}= \dfrac{1}{1-\log(1+w(z))}. \label{eq:2.3}
        \end{equation}
    Suppose $w(z)=(p(z)-1)/(p(z)+1)$, where $p \in \mathcal{P}$ given by \eqref{pp}. Then by using \eqref{eq:1.1} and \eqref{pp} in \eqref{eq:2.3}, we obtain
        \begin{eqnarray}
            a_{2}&=&\dfrac{1}{2}p_{1},\label{a2}\\
            a_{3}&=&\dfrac{1}{16}(p_1^2+4p_{2}),\label{a3}\\
            a_{4}&=&\dfrac{1}{288}\left(p_{1}^{3}+12p_{1}p_{2}+48p_{3}\right),\label{a4}\\
            a_5&=&-\dfrac{1}{4608}\left(7p_1^4-24p_1^2p_2-96p_1p_3-576p_4\right). \label{a5}
        \end{eqnarray}
        Now, using \eqref{a2} and \eqref{a3}, we get
        \begin{equation}
            a_2^2-a_3=\dfrac{1}{16}(3p_1^2-4p_2)=-\dfrac{1}{4}\left(p_2-\dfrac{3}{4}p_1^2\right). \label{a2-a3^2}
        \end{equation}
            Now using lemma \ref{lem1} in \eqref{a2-a3^2}, we get
            \begin{equation}
                |a_2^2-a_3|\leq \dfrac{1}{4}\left|p_2-\dfrac{3}{4}p_1^2\right|\leq\dfrac{1}{2}.
            \end{equation}
            This bound is sharp and equality hold for the function $f_2(z) \in \mathcal{S}^{\ast}_{B}$, defined as
            \begin{equation}
            f_{2}(z) = z \exp\left(\int_{0}^{z} \dfrac{\log(1+t^2)}{t(1-\log(1+t^2))} \, dt\right) =z + \dfrac{1}{2} z^3 + \dfrac{1}{4} z^5 + \dfrac{5}{36}z^{7}\cdots.
            \end{equation}
            This completes the proof.
\end{proof}

Now we obtain the Zalcman inequality for $n = 3$.
\begin{theorem}
    Let $f \in \mathcal{S}^*_{B}$, then 
        \begin{equation}
            |a_3^2-a_5|\leq \frac{1}{4}. \nonumber
        \end{equation}
    This inequality is sharp.
\end{theorem}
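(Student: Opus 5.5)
We plan to work directly with the Schwarz function $w(z)=\sum_{n\ge1}b_nz^n$ from \eqref{schwarzfunc} rather than the Carath\'eodory parametrization, since the natural extremal function comes from $w(z)=z^4$ and Lemma~\ref{lem3} gives exactly the control we need on $b_4$. Writing $L=\log(1+w)$ and $B(w)=\sum_{k\ge0}L^k$, a short expansion gives
\[
B(w)=1+w+\tfrac12 w^2+\tfrac13 w^3+\tfrac16 w^4+\cdots .
\]
Substituting $w(z)$ yields $zf'(z)/f(z)=1+\sum c_nz^n$ with
\[
c_1=b_1,\quad c_2=b_2+\tfrac12 b_1^2,\quad c_3=b_3+b_1b_2+\tfrac13 b_1^3,
\]
and $c_4=b_4+b_1b_3+\tfrac12 b_2^2+\tfrac32 b_1^2b_2+\tfrac16 b_1^4$ (to be rechecked in the computation). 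The recursion $(n-1)a_n=\sum_{k=1}^{n-1}c_k a_{n-k}$ then expresses $a_3$ and $a_5$ as polynomials in $b_1,\dots,b_4$.

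Next we will form $a_3^2-a_5$. Since $a_5$ is the only term that contains $b_4$, and it enters linearly as $\tfrac14 b_4$, we expect a decomposition
\[
a_3^2-a_5=-\tfrac14 b_4+\Psi(b_1,b_2,b_3),
\]
where $\Psi$ collects all monomials in $b_1,b_2,b_3$. One term of $\Psi$ is $-\tfrac14 b_1b_3$, and the terms in $b_2^2$ should partly cancel between $a_3^2$ and $a_5$. Applying the triangle inequality together with $|b_4|\le 1-|b_1|^2-|b_2|^2$ from Lemma~\ref{lem3}, the theorem reduces to the real inequality
\[
|\Psi(b_1,b_2,b_3)|\le \tfrac14\bigl(|b_1|^2+|b_2|^2\bigr).
\]

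To prove this inequality we bound $|b_3|\le 1-|b_1|^2-|b_2|^2/(1+|b_1|)$ (Lemma~\ref{lem3}). We also use $|b_2|\le 1-|b_1|^2$. Setting $x=|b_1|$ and $y=|b_2|$, this turns $|\Psi|$ into a polynomial-type majorant $G(x,y)$ on the region $\{0\le x\le1,\ 0\le y\le 1-x^2\}$. It then remains to check $G(x,y)\le \tfrac14(x^2+y^2)$ there. The pure-$b_2$ terms should be harmless: indeed $w=z^2$ gives $a_3=\tfrac12$, $a_5=\tfrac14$ and hence $a_3^2-a_5=0$, which suggests these terms cancel.

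\emph{Main obstacle.} The difficult part is the terms linear in $b_1$, such as $b_1b_3$ and $b_1b_2$, which are only of first order in $x$ near $x=0$, while the available slack $\tfrac14 x^2$ is quadratic. If the crude triangle-inequality estimate fails for small $x$, we will first try to group these terms with $b_4$ before applying Lemma~\ref{lem3}. Concretely, we use the sharper Carlson-type bound $|b_4+\alpha b_1b_3|\le 1-|b_1|^2-|b_2|^2$ for suitable small $\alpha$. Failing that, we switch to the Carath\'eodory representation \eqref{a3}, \eqref{a5} with Lemma~\ref{lemma2.1}, and carry out the standard maximization over $p_1\in[0,2]$ and $|\gamma|,|\eta|,|\rho|\le1$.

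\emph{Sharpness.} Equality is attained by $f(z)=z\exp\bigl(\int_0^z \log(1+t^4)/(t(1-\log(1+t^4)))\,dt\bigr)$, which corresponds to $w(z)=z^4$. For this function $a_3=0$ and $a_5=\tfrac14$, so $|a_3^2-a_5|=\tfrac14$.
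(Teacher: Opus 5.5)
\textbf{The gap is fatal: the inequality is false.} None of your three routes (triangle inequality, Carlson-type grouping, Carath\'eodory parametrization) can succeed. Carrying out your expansion correctly (the $w^3$-term contributes $b_1^2b_2$ to $c_4$, not $\tfrac32 b_1^2b_2$) gives
\[
a_3^2-a_5=-\tfrac14 b_4-\tfrac{7}{12}b_1b_3-\tfrac{5}{24}b_1^2b_2+\tfrac{61}{288}b_1^4 .
\]
So the $b_2^2$ terms cancel completely, and the $b_1b_3$ coefficient is $-\tfrac{7}{12}$, not $-\tfrac14$. Now take
\[
w(z)=z\,\frac{\epsilon+z^2\chi(z)}{1+\bar\epsilon z^2\chi(z)},\qquad \chi(z)=\frac{s+z}{1+\bar s z},\qquad |\epsilon|,|s|<1 .
\]
This is a Schwarz function with $b_1=\epsilon$, $b_2=0$, $b_3=(1-|\epsilon|^2)s$ and $b_4=(1-|\epsilon|^2)(1-|s|^2)$. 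Hence
\[
-288\,(a_3^2-a_5)=72(1-|\epsilon|^2)(1-|s|^2)+168(1-|\epsilon|^2)\,\epsilon s-61\epsilon^4 .
\]
For real $\epsilon$ and $s=\tfrac76\epsilon$ this gives $|a_3^2-a_5|=\tfrac14+\tfrac{13}{144}\epsilon^2+O(\epsilon^4)$. Concretely, $\epsilon=0.3$, $s=0.35$ yields $|a_3^2-a_5|\approx 0.2537>\tfrac14$. Aligning phases with $\epsilon=te^{i\pi/4}$, $s=\tfrac76 t e^{-i\pi/4}$, $t^2=\tfrac{13}{37}$ yields about $0.2659$.

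This is exactly the obstacle you flagged. $w=z^4$ can be a local maximizer only if the $|b_1b_3|$ coefficient is at most twice the $|b_4|$ coefficient, and $\tfrac{7}{12}>\tfrac12$. The same family shows that the Carlson-type bound $|b_4+\alpha b_1b_3|\le 1-|b_1|^2-|b_2|^2$ you intend to invoke is false for every real $\alpha\neq 0$. Taking $s=\alpha\epsilon/2$ gives $(1-\epsilon^2)(1+\alpha^2\epsilon^2/4)$, which exceeds $1-|b_1|^2-|b_2|^2=1-\epsilon^2$. Consequently your sharpness function ($w=z^4$) attains $\tfrac14$ but is not extremal.

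\textbf{The paper's own proof fails for the same reason.} It follows precisely your first, crude route: triangle inequality with Lemma~\ref{lem3}, giving the majorant $G(x,y)$, which is then evaluated only at $(0,0)$ and $(1,0)$. But $G(x,0)=\tfrac{1}{288}\bigl(72+168x-72x^2-168x^3+61x^4\bigr)$ has positive slope at $x=0$ (e.g.\ $G(\tfrac12,0)\approx 0.42$). So that argument does not yield $\tfrac14$ either, and by the example above no argument can. The statement needs a larger constant. Determining the sharp one requires treating the interaction between the $b_1b_3$ and $b_4$ terms exactly, for instance through the Schur-algorithm parametrization used above, rather than estimating term by term.
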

\begin{proof}
    Let $f\in \mathcal{S}^{\ast}_{B}$. Then, there exists Schwarz function $w(z)$ satisfying \eqref{schwarzfunc}. By solving and comparing the coefficients of $f(z)$ and $w(z)$  from \eqref{eq:2.3}, we obtain
\begin{equation}
    a_2=b_1,\quad a_3=\dfrac{1}{4}(3b_1^2+2b_2),\quad a_4= \dfrac{1}{36}(19b_1^3+30b_1b_2+12b_3),\label{a2a3a4 in terms of w}
\end{equation}
\begin{equation}
    a_5=\dfrac{1}{288}(101b_1^4+276b_1^2b_2+72b_2^2+168b_1b_3+72b_4). \label{a5inw}
\end{equation}
    Now using \eqref{a2a3a4 in terms of w} and \eqref{a5inw}, we get
    \begin{equation}
        a_3^2-a_5=\dfrac{1}{288}(72b_4+168b_1b_3+60b_1^2b_2-60b_1^4). \label{a3^2-a5}
    \end{equation}
    Now using lemma \ref{lem3} in \eqref{a3^2-a5}, we obtain
    \begin{eqnarray}
        |a_3^2-a_5|&\leq&\dfrac{1}{288}\left|72b_4+168b_1b_3+60b_1^2b_2-61b_1^4\right|, \nonumber\\
        &\leq&\dfrac{1}{288}\left(72|b_4|+168|b_1||b_3|+60|b_1|^2|b_2|+61|b_1|^4\right), \nonumber\\
        &\leq&\dfrac{1}{288}\Big(72(1-|b_1|^2-|b_2|^2)+168|b_1|\Big(1-|b_1|^2-\dfrac{|b_2|^2}{1+|b_1|}\Big)+60|b_1|^2|b_2|+61|b_1|^4\Big). \nonumber
    \end{eqnarray}
    Now taking $x:=|b_1|$ and $y:=|b_2|$ in above equation, we obtain
    \begin{eqnarray}
        |a_3^2-a_5|&\leq&\dfrac{1}{288}\Big(72(1-x^2-y^2)+168x\Big(1-x^2-\dfrac{y^2}{1+x}\Big)+60x^2y+61x^4\Big)\nonumber\\
        &\leq&\dfrac{1}{288}\left(168x(1-x^2)+72(1-x^2)+61x^4+60x^2y-72y^2-168x\left(\dfrac{y^2}{1+x}\right)\right)\nonumber\\
        &\leq&\dfrac{1}{288}\left(168x(1-x^2)+72(1-x^2)+61x^4+60x^2y-\left(72+\dfrac{168x}{1+x}\right)y^2\right):=G(x,y) \nonumber
        \end{eqnarray}
        Hence, we get 
        \begin{equation}
         |a_3^2-a_5|\leq G(x,y). \label{Gx,y}
    \end{equation}
    Now we need to find the maximum value $G(x,y)$ in the region $\Omega:=\{(x,y):\;x\ge0,\;y\ge0,\;y\le1-x^2\}$. First we'll check interior of $\Omega$. From $\partial G/\partial y=0$, we obtain
    \[y=\dfrac{30x^2(1+x)}{240x+72}:=y^*\]
    Since $y^*\not\le 1-x^2$ for all $x\in[0,1]$, hence there doesn't exists solution.
    Now we'll check for $y=0$ and $x=1$, we get $G(1,0)=61/288$. and For $y=0$ and $x=0$, we get $G(0,0)=72/288=1/4$. Hence from \eqref{Gx,y}, we get
    \[
    |a_3^2-a_5|\leq\dfrac{1}{4}.
    \]
    This inequality is sharp and the extremal for this inequality is obtained by the function $f_{3}(z)\in\mathcal{S}^{\ast}_{B}$, defined as
    \begin{equation}
f_{3}(z)
= z \exp\left(\int_{0}^{z} \dfrac{\log(1+t^4)}{t(1-\log(1+t^4))} \, dt\right)
= z + \dfrac{1}{4} z^5 + \dfrac{1}{32} z^9 + \cdots. \label{f2}
\end{equation}
Hence this completes the proof.
   \end{proof}

\section*{Hankel determinant}
Now we obtain sharp bound for the third-order Hankel determinant of functions associated with the class $\mathcal{S}^*_{B}$. The extremal function attaining equality is also identified.

\begin{theorem}
    Let $f \in \mathcal{S}^*_{B}$, then 
        \begin{equation}
            |H_{3,1}(f)|\leq \frac{1}{9}. \nonumber
        \end{equation}
    This inequality is sharp.
\end{theorem}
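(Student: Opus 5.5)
The plan is to reduce $H_{3,1}(f)$ to an expression in Carath\'eodory coefficients and then bound it with \Cref{lemma2.1}. The extremal function I expect is the one with $w(z)=z^3$ in \eqref{eq:2.3}, namely
\[
f_4(z)=z\exp\left(\int_0^z \frac{\log(1+t^3)}{t(1-\log(1+t^3))}\,dt\right)=z+\tfrac13 z^4+\cdots .
\]
For $f_4$ we have $a_2=a_3=a_5=0$ and $a_4=1/3$, so \eqref{H31} gives $H_{3,1}(f_4)=-a_4^2=-1/9$. This shows the bound $1/9$ cannot be improved, provided it holds.

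\textbf{Step 1: reduce to a real-parameter problem.} Substitute \eqref{a2}--\eqref{a5} into \eqref{H31}. This writes $H_{3,1}(f)$ as a polynomial in $p_1,p_2,p_3,p_4$, homogeneous of weight $6$ when $p_k$ is given weight $k$.
\begin{itemize}
\item Both $\mathcal{S}^*_B$ and $|H_{3,1}|$ are invariant under rotations $e^{-i\theta}f(e^{i\theta}z)$. We may therefore assume $p_1=c\in[0,2]$.
\item Insert the representations of $p_2,p_3,p_4$ from \Cref{lemma2.1}. The result has the form
\[
H_{3,1}(f)=\frac{1}{N}\Big(\Phi_0(c,\gamma)+\Phi_1(c,\gamma)\eta+\Phi_2(c,\gamma)\eta^2+\Psi(c,\gamma)(1-|\eta|^2)\rho\Big)
\]
for a suitable constant $N$ and polynomials $\Phi_0,\Phi_1,\Phi_2,\Psi$. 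Each of $\Phi_1,\Phi_2,\Psi$ carries a factor $(4-c^2)(1-|\gamma|^2)$.
\item The factor $(4-c^2)$ is present in every term except $\Phi_0$. The endpoint $c=2$ is handled directly: there $p_k=2$ for all $k$, so the answer is an explicit number, and it is small.
\end{itemize}

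\textbf{Step 2: pass to a function of three real variables.} For $c\in[0,2)$, use $|\rho|\le 1$ and the triangle inequality. Put $x=|\gamma|$ and $y=|\eta|$. This gives
\[
|H_{3,1}(f)|\le \frac{1}{N}\Big(|\Phi_0|+|\Phi_1|y+|\Phi_2|y^2+|\Psi|(1-y^2)\Big)=:F(c,x,y)
\]
on the cuboid $[0,2]\times[0,1]\times[0,1]$. Here each $|\Phi_j|$ is bounded by a polynomial in $c$ and $x$.

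\textbf{Step 3: maximize $F$.}
\begin{itemize}
\item \emph{Interior.} Show that $\nabla F=0$ has no solution. Typically $\partial F/\partial y=0$ forces a value of $y$ outside $[0,1]$, or the remaining equations admit no common root.
\item \emph{Faces and edges.} Handle $c=0$, $c=2$, $x=0$, $x=1$, $y=0$ and $y=1$ one at a time, each by one-variable calculus.
\item \emph{Expected maximizer.} At $c=0$, $x=0$, $y=1$ only the $p_3$-term survives, and it gives $(48\cdot 2/288)^2=1/9$. This matches the extremal $f_4$ above.
\end{itemize}

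\textbf{Main obstacle.} The hard part is the face $c\in(0,2)$ with $x$ near $0$ and $y=1$. There $F$ is a polynomial in $c$ alone and should decrease from $1/9$, but the coefficients from \eqref{a5} are messy.

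If the crude triangle inequality loses too much in this region, I would fall back on the Schwarz-coefficient route. That means using \eqref{a2a3a4 in terms of w} and \eqref{a5inw}, grouping $H_{3,1}$ into terms like $b_4$, $b_1b_3$ and $b_2^2$, and applying \Cref{lem3}, as in the proof of the $n=3$ Zalcman bound. This keeps the dominant term $-a_4^2\approx -b_3^2/9$ intact.
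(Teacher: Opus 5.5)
Your plan is the paper's proof: substitute \eqref{a2}--\eqref{a5} into $H_{3,1}$, insert \Cref{lemma2.1}, pass by the triangle inequality to $F(c,x,y)$ on $[0,2]\times[0,1]\times[0,1]$, check the interior, faces and edges, and find the maximum $1/9$ at $(0,0,1)$, attained by $w(z)=z^3$; carried out with correct algebra, it goes through.

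The step that needs care is the $x$-direction at $c=0$, $y=1$, rather than the edge $x=0$, $y=1$ you flagged, which is routine. There you must first simplify the $\eta^2$-coefficient exactly to $288(4-c^2)(1-|\gamma|^2)\bigl(9c^2\overline{\gamma}-16(4-c^2)-2(4-c^2)|\gamma|^2\bigr)$ before taking moduli. This gives $F(0,x,1)=(1-x^2)(8+x^2)/72\le 1/9$. The paper's displayed $\nu_3$ has $16\gamma^2$ where $16|\gamma|^2$ belongs, and it leads instead to $(1-x^2)(8+17x^2)/72$, which exceeds $1/9$.
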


\begin{proof}
    Let $f \in \mathcal{S}^*_{B}$. Using \eqref{a2}, \eqref{a3}, \eqref{a4} and \eqref{a5} in \eqref{H31}, we get
        \begin{equation}
            H_{3,1}(f)=2a_2a_3a_4-a_2^2a_5-a_3^3-a_4^2+a_3a_5. \label{H31a}
        \end{equation}
    Now from equations \eqref{a2}, \eqref{a3}, \eqref{a4} and \eqref{a5} in \eqref{H31a}, we get
        \begin{eqnarray}
            H_{3,1}(f)=\frac{1}{663552}\Big(163p_1^6-732p_1^4p_2+3552p_1^3p_3+21888p_1p_2p_3-\nonumber\\
            \quad 576p_1^2(2p_2^2+27p_4)-1152(9p_2^3+16p_3^2-18p_2p_4)\Big). \label{h31p}
        \end{eqnarray}
    Now using ~\ref{lemma2.1} in \eqref{h31p} and taking $p:=p_1$, we get
        \begin{equation}
            H_{3,1}(f)=\frac{1}{663552}\Big(\nu_1(p,\gamma)+ \nu_2(p,\gamma)\,\eta+ \nu_3(p,\gamma)\,\eta^2+ \psi(p,\gamma,\eta)\,\rho\Big), \label{11}
        \end{equation}
    where 
        \begin{eqnarray}
            \nu_1(p,\gamma)&=&37p^6-102p^4\gamma(4-p^2)-48p^2\gamma^2(4-p^2)^2-96p^2\gamma^2(4-p^2)-\nonumber \\
            & & \quad 72p^2\gamma^3(4-p^2)^2-2592p^2\gamma^3(4-p^2)+144p^2\gamma^4(4-p^2)^2, \nonumber\\
            \nu_2(p,\gamma)&=&48p(4-p^2)(1-|\gamma|^2)(p^2 +120\gamma+24p^2\gamma-12\gamma^2(4-p^2)),\nonumber\\
            \nu_3(p,\gamma)&=&288(4-p^2)(1-|\gamma|^2)(9p^2\overline{\gamma}-16(4-p^2)+16\gamma^2(4-p^2)-18|\gamma|^2(4-p^2)),\nonumber\\
            \psi(p,\gamma,\eta)&=&2592(4-p^2)(1-|\gamma|^2)(1-|\eta|^2)(2\gamma(4-p^2)-p^2).\nonumber
        \end{eqnarray}
    Assume \(x := |\gamma|\), \(y := |\eta|\) and since \(|\rho| \le 1\), the expression \eqref{11} reduces to
        \[
            \left| H_{3,1}(f) \right|\le \frac{1}{663552}\left(|\nu_1(p,\gamma)|+ |\nu_2(p,\gamma)|\, y+ |\nu_3(p,\gamma)|\, y^2+ |\psi(p,\gamma,\eta)|\right)\le F(p,x,y),
        \]
    where
        \begin{equation}
            F(p,x,y)= \frac{1}{663552}\Big(g_1(p,x)+ g_2(p,x)\, y+ g_3(p,x)\, y^2+ g_4(p,x)\,(1 - y^2)\Big), \label{G}
        \end{equation}
    with
        \begin{eqnarray}
            g_1(p,x)&:=&37p^6+102p^4x(4-p^2)+48p^2x^2(4-p^2)^2+96p^2x^2(4-p^2)+\nonumber \\
            & & \quad 72p^2x^3(4-p^2)^2 +2592p^2x^3(4-p^2)+144p^2x^4(4-p^2)^2, \nonumber\\
            g_2(p,x)&:=&48p(4-p^2)(1-x^2)(p^2 +120x+24p^2x+12x^2(4-p^2)),\nonumber\\
            g_3(p,x)&:=&288(4-p^2)(1-x^2)(9p^2x+16(4-p^2)+34x^2(4-p^2)),\nonumber\\
            g_4(p,x)&:=&2592(4-p^2)(1-x^2)(2x(4-p^2)+p^2).\nonumber
        \end{eqnarray}
    The function $F(p,x,y)$ is maximized over the cuboid $S: = [0,2] \times [0,1] \times [0,1]$ by analyzing its interior and boundary points, including all faces and edges. 
    \begin{enumerate}
    \item \textbf{On Interior of $S$.}\\
        We first analyze the interior of $S$. For $(p,x,y) \in (0,2) \times (0,1) \times (0,1),$ the interior extrema of $F(p,x,y)$ is determined by partially differentiating \eqref{G} with respect to $y$. After simplification, we obtain
            \begin{eqnarray}
                \dfrac{\partial F}{\partial y}&=&\dfrac{1}{13824}(4-p^2)(1-x^2)\Big(p^2(1-300y+12x(2+27y)-\nonumber\\
                & & \quad 12x^2(1+34y))+24(x(5-36y)+32y+x^2(2+68y))\Big)=0, \nonumber
            \end{eqnarray}
        which yields 
            \begin{equation}
                y=\dfrac{24x(5+2x)+p^2(1+24x-12x^2)}{12(8(9x-17x^2-8)+p^2(25-27x+34x^2))}:=y_{0} \nonumber
            \end{equation}
        For the existence of a critical point in the interior of $S$, it is necessary that $y_{0} \in (0,1)$. This condition is satisfied only if the following inequalities hold simultaneously:
            \begin{equation}
                p^3(1-6x)+(4-p^2)(12px^2-30px+300-324x+408x^2)<432(1-x), \label{2.6}
            \end{equation}
        and
            \begin{equation} 
                25p^2>(4-p^2)(34x^2-27x)-80x^2+36x+64. \label{2.7}
            \end{equation}
        However, there exists no solution that satisfies both inequalities \eqref{2.6} and \eqref{2.7}. Hence, the function $F(p,x,y)$ admits no critical point in $(0,2) \times (0,1) \times (0,1)$.

    \item \textbf{Interior of the faces of $S$.}\\
        We now investigate the extrema of $F(p,x,y)$ on the interiors of the six faces of the cuboid $S$.
            \begin{enumerate}
                \item On the face $p=0, \; F(p,x,y)$ reduces to 
                    \begin{eqnarray*}
                         h_1(x,y)&:=&F(0,x,y)=\dfrac{1}{144}(1-x^2)\Big(16y^2+2x^2y(1+17y)+\\
                         & & \quad \quad \quad \quad \quad \quad x(18+5y-18y^2)\Big).
                    \end{eqnarray*}
        On solving $\partial h_1/\partial y=0$,
            \begin{equation}
                \dfrac{\partial h_1}{\partial y}=\dfrac{1}{144}(1-x^2)\Big(x(5-36y)+32y+x^2(2+68y)\Big)=0,\nonumber
            \end{equation}
        which yields 
            \begin{equation*}
                y=\dfrac{x(5+2x)}{4(9x-17x^2-8)}, \quad x\in(0,1).
            \end{equation*}
\begin{figure}[H]
        \centering
    \includegraphics[width=0.5\textwidth]{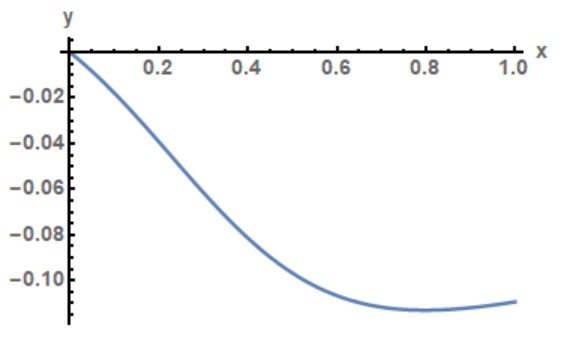}
    \caption{The graph of $y$ on $(0,1)$.}
    \label{fig:h11}
    \end{figure}

Hence, there does not exist any critical points in $(0,1)\times(0,1)$.
                \item On the face $p=2, \; F(p,x,y)$ reduces to
                \begin{equation*}
                    F(2,x,y)=\dfrac{37}{10368}\approx0.00356867. 
                \end{equation*}
                
                \item On the face $x=0, \; F(p,0,y)$ reduces to
                \begin{eqnarray}
                    h_2(x,y)&:=&F(p,0,y)=\dfrac{1}{663552}\Big(37p^6+48p^3(4-p^2)+\nonumber\\
                    & &\quad \quad \quad \quad \quad \quad 4608(4-p^2)^2y^2+2592p^2(4-p^2)(1-y^2)\Big). \nonumber
                \end{eqnarray}
                A direct computation shows that the equations $\partial h_2/\partial y = 0$ and $\partial h_2/\partial p = 0$, admit no solution  in $(0,2)\times(0,1)$.
    
                \item On the face $x=1, \; F(p,x,y)$ reduces to
                \begin{equation}
                    h_3(p):=F(p,1,y)=\dfrac{1}{663552}\Big(p^2(14976-4392p^2+199p^4)\Big).\nonumber
                \end{equation}
                To determine its extreme values, we differentiate $h_3$ with respect to $p$ and consider the equation $\partial h_3/\partial p=0$, we get
                \begin{equation*}
                    \dfrac{\partial h_3}{\partial p}=\dfrac{1}{110592}\Big(p(4992-2928p^2+199p^4)\Big)=0 \implies p\approx1.4029325\in(0,2).
                \end{equation*}
    \begin{figure}[H]
        \centering
    \includegraphics[width=0.5\textwidth]{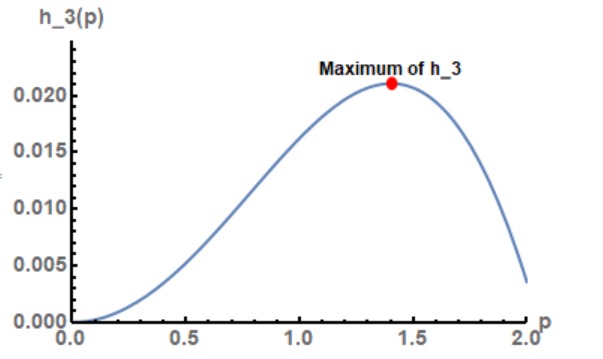}
    \caption{The graph of $h_3$ with maximum at $p_0$.}
    \label{fig:h}
    \end{figure}

                \item On the face $y=0, \; F(p,x,y)$ reduces to
                \begin{eqnarray*}
                    h_4(p,x):&=&F(p,x,0)\\
                    &=&\dfrac{1}{663552}\Big(82944\,x(1 - x^2) +1152\,p^2(9 - 36x - 8x^2 + \\
                    & &46x^3 + 2x^4)-24\,p^4(108 - 233x - 88x^2 +348x^3+\\
                    & &  48x^4)+p^6(37 - 102x + 48x^2 +  72x^3 + 144x^4)\Big).
                \end{eqnarray*}
                A computation shows that 
                \begin{eqnarray*}
                    \dfrac{\partial h_4}{\partial p}&=&\dfrac{1}{110592}p\Big( 384\,(9 - 36x - 8x^2 + 46x^3 + 2x^4)-16\,p^2(108 - \\ & &233x - 88x^2 + 348x^3 + 48x^4)+p^4(37 - 102x + 48x^2 +\\
                    & & 72x^3 + 144x^4)\Big),
                \end{eqnarray*}
                and 
                \begin{eqnarray*}
                    \dfrac{\partial h_4}{\partial x}&=&\dfrac{1}{663552}\Big( 82944(1 - x^2) -165888\,x^2-1152\,p^2(36 + 16x - \\
                    & &138x^2 - 8x^3)+ 24\,p^4(233 + 176x - 1044x^2 - 192x^3)-\\
                    & &p^6(102 - 96x - 216x^2 - 576x^3)\Big).
                \end{eqnarray*}
                A numerical analysis indicates that the system of equations $\partial h_4/\partial x = 0$ and $\partial h_4/\partial p = 0$ admit no solution in $(0,2) \times (0,1)$.

                \item On the face $y=1, \; F(p,x,y)$ reduces to
                \begin{eqnarray*}
                    h_5(p,x):=F(p,x,1)&=&\dfrac{1}{663552}\Big(4608\,p\,x(5 + 2x - 5x^2 - 2x^3)-\\
                     & & 48\,p^5(1 +24x - 13x^2 - 24x^3 +12x^4)+ \\
                     & &9216(8 + 9x^2 - 17x^4)+192\,p^3(1 - 6x -\\
                    & &25x^2 + 6x^3 + 24x^4)- 1152\,p^2(32 -9x + \\ 
                    & &35x^2 - x^3 - 70x^4)+ p^6(37- 102x + 48x^2 +\\ 
                    & & 72x^3+144x^4)+ 24\,p^4(192 - 91x +196x^2 -\\ 
                    & & 24x^3 - 456x^4)\Big).
                \end{eqnarray*}
                Following the same procedure as in the preceding case for the face $y=0$, we conclude that the system of equations $\partial h_6/\partial x = 0$ and $\partial h_6/\partial p = 0$ admit no solution in $(0,2) \times (0,1)$.
            \end{enumerate}
    
        \item \textbf{On the Edges of S.}\\
            Now we determine the maximum values attained by the function $F(p,x,y)$ along the edges of the cuboid $S$.\\
            Setting $x=0$ and $y=0$, the function $F(p,x,y)$ reduces to a single-variable function of $p$, given by
            \begin{equation}
                k_1(p):=F(p,0,0)=\dfrac{1}{663552}\bigl(37p^6+2592p^2(4-p^2)\bigr). \nonumber
            \end{equation}
            Solving $\partial k_1/\partial p=0$, we obtain the critical point $p:=p_0\approx1.44702$. A straightforward computation shows that $k_1(p)$ attains its maximum value at $p_0$, where $k_1(p_0)\approx0.0161025$. Hence,
            \[
                F(p,0,0)\leq0.0161025.
            \]
\noindent
Next, taking $x=0$ and $y=1$, the function $F(p,x,y)$ reduces to
\begin{equation}
k_2(p):=F(p,0,1)
=\dfrac{1}{663552}\bigl(37p^6+48p^3(4-p^2)+4608(4-p^2)^2\bigr). \nonumber
\end{equation}
Solving $\partial k_2/\partial p=0$, we obtain the critical point $p:=p_0=0$. Since $k_2(p)$ is a decreasing function in the admissible domain, its maximum value is attained at $p_0$, where $k_2(p_0)=1/9$. Therefore,
\[
F(p,0,1)\leq\frac{1}{9}.
\]
\noindent
Now, setting $p=0$ and $x=0$, we have
\begin{equation}
F(0,0,y)=\dfrac{y^2}{9}\le\dfrac{1}{9}, \quad y\in[0,1]. \nonumber
\end{equation}
\noindent
Taking $x=1$ and $y=1$, the function $F(p,x,y)$ reduces to
\begin{equation}
k_3(p):=F(p,1,1)
=\dfrac{1}{663552}\bigl(p^2(14976-4392p^2+199p^4)\bigr). \nonumber
\end{equation}
It is evident that $F(p,1,1)=F(p,1,0)$. Solving $\partial k_3/\partial p=0$, we find that $k_3(p)$ attains its maximum value at $p:=p_0\approx1.40293$, where $k_3(p_0)\approx0.0210673$ and consequently,
\[
F(p,1,1)=F(p,1,0)\leq0.0210673.
\]
\noindent
Further, taking $p=0$ and $x=1$, we obtain
\begin{equation}
F(0,1,y)=0. \nonumber
\end{equation}
\noindent
Evaluating $F(p,x,y)$ at $p=2$, we observe that
\begin{equation}
F(2,1,y)=F(2,0,y)=F(2,x,0)=F(2,x,1)
=\dfrac{37}{10368}\approx0.00356867. \nonumber
\end{equation}
\noindent
Next, taking $p=0$ and $y=1$, the function reduces to
\begin{equation}
k_4(x):=F(0,x,1)
\le\dfrac{1}{9}, \quad x\in[0,1]. \nonumber
\end{equation}
\noindent
Finally, taking $p=0$ and $y=0$, we obtain
\begin{equation}
k_5(x):=F(0,x,0)=\dfrac{1}{8}x(1-x^2). \nonumber
\end{equation}
Solving $\partial k_5/\partial x=0$, we find the critical point $x:=x_0\approx0.57735$. A direct computation shows that $k_5(x)$ attains its maximum value at $x_0$, where $k_5(x_0)\approx0.0481125$. Hence,
\[
F(0,x,0)\leq0.0481125.
\]
\end{enumerate}
Hence, in  view of cases $1$, $2$ and $3$ we get
\begin{equation}
    |H_{3,1}(f)|\leq\dfrac{1}{9}. \nonumber
\end{equation}
And the extremal function for this inequality is $f_4(z)\in \mathcal{S}^*_B,$ defined as
\begin{equation}
 f_{4}(z) = z \exp\left(\int_{0}^{z} \dfrac{\log(1+t^3)}{t(1-\log(1+t^3))} \, dt\right) =z + \dfrac{1}{3} z^4 + \dfrac{5}{36} z^7 + \dfrac{11}{324}z^{10}+\cdots . \nonumber
\end{equation}
This completes the proof.
    \end{proof}

\section*{Krushkal Inequality}
Here, we aim to establish the following inequality:
\[
\left|K_{n,m}(f)\right| := \left| a_n^m - a_2^{\,m(n-1)} \right| \leq 2^{m(n-1)} - n^m,
\]
for functions of the form \eqref{eq:1.1}. Particular attention is devoted to the cases $n = 4,\; m = 1$ and $n = 5,\; m = 1$, which are of special significance in this context. First we'll establish the bound for $|a_4-a_2^3|.$
\begin{theorem}
    Let $f \in \mathcal{S}^*_{B}$, then 
        \begin{equation}
            |a_4-a_2^3|\leq \frac{17}{36}. \nonumber
        \end{equation}
    This inequality is sharp.
\end{theorem}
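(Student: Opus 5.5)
We will work with the Schwarz-function representation used for the Zalcman functional with $n=3$. By \eqref{a2a3a4 in terms of w} we have $a_2=b_1$ and $a_4=\frac{1}{36}(19b_1^3+30b_1b_2+12b_3)$. Hence
\[
a_4-a_2^3=\frac{1}{36}\bigl(-17b_1^3+30b_1b_2+12b_3\bigr).
\]
Applying the triangle inequality together with the bound $|b_3|\le 1-|b_1|^2-\frac{|b_2|^2}{1+|b_1|}$ from Lemma \ref{lem3}, and writing $x:=|b_1|$, $y:=|b_2|$, we get
\[
|a_4-a_2^3|\le \frac{1}{36}G(x,y),\qquad G(x,y):=17x^3+30xy+12\Bigl(1-x^2-\frac{y^2}{1+x}\Bigr).
\]
The remaining task is to show $G\le 17$ on $\Omega=\{x\ge0,\ y\ge0,\ y\le 1-x^2\}$; Lemma \ref{lem3} guarantees that $(x,y)$ lies in $\Omega$.

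For fixed $x$, $G$ is a concave quadratic in $y$. Its unconstrained maximiser is $y^*=\frac{5x(1+x)}{4}$. We compare $y^*$ with the constraint boundary $1-x^2$; the condition $y^*\ge 1-x^2$ reduces to $x\ge 4/9$. This gives two cases.

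\textbf{Case $x\ge 4/9$.} The maximum in $y$ is on the boundary $y=1-x^2$. Substituting gives the one-variable polynomial
\[
\phi(x)=17x^3+30x(1-x^2)+12-12x^2-12(1-x)^2(1+x).
\]
We must show $\phi(x)\le 17$ on $[4/9,1]$, with equality at $x=1$. Expanding, $17-\phi(x)$ should be a cubic that factors with a root at $x=1$. We then check the sign of the quadratic cofactor on $[4/9,1]$, either by its discriminant or by evaluating it at the endpoints and critical point.

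\textbf{Case $x<4/9$.} The maximum in $y$ is at $y=y^*$, which gives
\[
\psi(x)=17x^3+12-12x^2+\frac{75}{4}x^2(1+x).
\]
Here $\psi$ is increasing, and $\psi(4/9)\approx 16.5<17$. So this case gives a strictly smaller value.

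Combining the two cases gives $|a_4-a_2^3|\le 17/36$. For sharpness we take $w(z)=z$, so $b_1=1$ and all other $b_k=0$. The corresponding function is
\[
f(z)=z\exp\int_0^z\frac{\log(1+t)}{t(1-\log(1+t))}\,dt\in\mathcal{S}^*_B.
\]
For it, $a_2=1$ and $a_4=19/36$, so $|a_4-a_2^3|=17/36$.

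The main obstacle is the boundary case $x\ge4/9$: the maximum sits exactly at the corner $x=1$, $y=0$, so the inequality $\phi(x)\le 17$ must be verified cleanly by polynomial factoring rather than left to numerics. If the factoring turns out messy, the fallback is a monotonicity argument: show $\phi'(x)\ge 0$ on $[4/9,1]$.
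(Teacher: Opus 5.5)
\textbf{There is a genuine gap, and the statement itself is false.} Your reduction to $G(x,y)$ is correct. So are the critical point $y^*=\tfrac54x(1+x)$ and the switch at $x=4/9$. The problem is the step you single out as the main obstacle: it is not merely delicate, it is false. Because $12(1-x^2)-12(1-x)^2(1+x)=12x(1-x^2)$, your boundary function simplifies to $\phi(x)=42x-25x^3$, and
\[
17-\phi(x)=(x-1)\bigl(25x^2+25x-17\bigr)<0\qquad\text{for } \tfrac{\sqrt{93}-5}{10}<x<1,
\]
where $\tfrac{\sqrt{93}-5}{10}\approx0.464$. On $[4/9,1]$ the maximum of $\phi$ is at $x_0=\sqrt{14}/5\approx0.748$, with $\phi(x_0)=28x_0=\tfrac{28\sqrt{14}}{5}\approx20.95>17$. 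Your fallback fails as well, since $\phi'(x)=42-75x^2<0$ for $x>x_0$.

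This excess is not a loss from the triangle inequality, because the bound $G$ is attained. Take $w(z)=z\,\dfrac{x_0-z}{1-x_0z}$. Then $b_1=x_0$, $b_2=-(1-x_0^2)=-\tfrac{11}{25}$ and $b_3=-x_0(1-x_0^2)$, so all three terms of $12b_3+30b_1b_2-17b_1^3$ are negative, and
\[
a_4-a_2^3=\frac{1}{36}\bigl(-42x_0+25x_0^3\bigr)=-\frac{7x_0}{9}=-\frac{7\sqrt{14}}{45}\approx-0.582 .
\]
The function $f(z)=z\exp\int_0^z\frac{B(w(t))-1}{t}\,dt$ lies in $\mathcal{S}^*_B$ and has $|a_4-a_2^3|>17/36\approx0.472$. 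So the inequality as stated is false. Completing your own case analysis gives the sharp bound $\max_\Omega G/36=7\sqrt{14}/45$, attained by this $f$. The value $17/36$ from $w(z)=z$ is only the maximum along the edge $y=0$.

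The paper's proof reaches $17/36$ only through two errors. It solves $\partial H/\partial y=0$ as $y=\tfrac54(1+x)$, dropping the factor $x$. It then compares only the corners $(0,0)$ and $(1,0)$ and never examines the arc $y=1-x^2$, which is exactly where the maximum lies.
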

\begin{proof}
    Let $f\in \mathcal{S}^{\ast}_{B}$. Then using \eqref{a2a3a4 in terms of w} and \eqref{a5inw}, we get
    \begin{equation}
        a_4-a_3^3=\dfrac{1}{36}\left(12b_3+30b_1b_2-17b_1^3\right).\label{a4-a2^3}
    \end{equation}
    Now using lemma\ref{lem3} in \eqref{a4-a2^3}, we obtain
    \begin{eqnarray}
    |a_4-a_3^3|&\leq&\dfrac{1}{36}\left|12b_3+30b_1b_2-17b_1^3\right|\nonumber\\
        &\leq&\dfrac{1}{36}\left(12|b_3|+30|b_1||b_2|+17|b_1|^3\right)\nonumber\\
        &\leq&\dfrac{1}{36}\left(12\left(1-|b_1|^2-\dfrac{|b_2|^2}{1+|b_1|}\right)+30|b_1||b_2|+17|b_1|^3\right)\nonumber
    \end{eqnarray}
    Now taking $x:=|b_1|$ and $y:=|b_2|$, we get
    \begin{eqnarray}
        |a_4-a_3^3|&\leq&\dfrac{1}{36}\left(12\left(1-x^2-\dfrac{y^2}{1+x}\right)+30xy+17x^3\right):=H(x,y).\label{Hx,y}
    \end{eqnarray}
    From \eqref{Hx,y}, we get
    \begin{equation}
        |a_4-a_3^3|\leq H(x,y).
    \end{equation}
    Now we need to find the maximum value of $H(x,y)$ in the region $\Omega:=\{(x,y):\;x\ge0,\;y\ge0,\;y\le1-x^2\}$. First we'll check interior of $\Omega$. From $\partial H/\partial y=0$, we obtain
    \[y=\dfrac{5}{4}(1+x):=y^*\]
    Since $y^*\not\le 1-x^2$ for all $x\in[0,1]$, hence there doesn't exists solution.
    Now we'll check for $y=0$ and $x=0$, we get $H(0,0)=1/3$. and For $y=0$ and $x=1$, we get $H(1,0)=17/36$. Hence from \eqref{Hx,y}, we get
    \[
     |a_4-a_3^3|\leq\dfrac{17}{36}.
    \]
    This inequality is sharp and the extremal for this inequality is obtained by the function $f_5(z)\in \mathcal{S}^{\ast}_{B}$, defined as
    \begin{equation}
 f_{5}(z) = z \exp\left(\int_{0}^{z} \dfrac{\log(1+t^3)}{t(1-\log(1+t^3))} \, dt\right) =z + \dfrac{1}{3} z^4 + \dfrac{5}{36} z^7 + \dfrac{11}{324}z^{10}+\cdots . \label{f3}
\end{equation}
This completes the proof.
\end{proof}

    Now we'll establish the bound for $|a_5-a_2^4|.$
    \begin{theorem}
        Let $f \in \mathcal{S}^*_{B}$, then 
        \begin{equation}
            |a_5-a_2^4|\leq \frac{187}{288}. \nonumber
        \end{equation}
    This inequality is sharp.
    \end{theorem}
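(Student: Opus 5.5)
The plan is to express $a_5-a_2^4$ through the Carath\'eodory coefficients, as in the proof of the Hankel bound. A first attempt via the Schwarz coefficients does not close, and I explain why below. From \eqref{a2} and \eqref{a5}, since $a_2^4=p_1^4/16=288p_1^4/4608$, we get
\begin{equation*}
a_5-a_2^4=\frac{1}{4608}\bigl(576p_4+96p_1p_3+24p_1^2p_2-295p_1^4\bigr).
\end{equation*}
As a consistency check, take $p(z)=(1+z)/(1-z)$, so $p_1=p_2=p_3=p_4=2$, which corresponds to $w(z)=z$. Then the bracket equals $1152+384+192-4720=-2992$, so $|a_5-a_2^4|=2992/4608=187/288$. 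Hence the extremal function will be $f(z)=z\exp\bigl(\int_0^z \log(1+t)/\bigl(t(1-\log(1+t))\bigr)\,dt\bigr)$, and sharpness is immediate.

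The first idea would be the Schwarz-function route used for $|a_3^2-a_5|$. From \eqref{a2a3a4 in terms of w} and \eqref{a5inw},
\begin{equation*}
a_5-a_2^4=\frac{1}{288}\bigl(72b_4+168b_1b_3+72b_2^2+276b_1^2b_2-187b_1^4\bigr).
\end{equation*}
Applying the triangle inequality and Lemma \ref{lem3} with $x=|b_1|$, $y=|b_2|$ leads to maximizing
\begin{equation*}
72(1-x^2)+168x(1-x^2)+276x^2y+187x^4-\frac{168xy^2}{1+x}
\end{equation*}
over $0\le y\le 1-x^2$. However, a spot check at $x=0.8$, $y=0.36$ gives about $205>187$. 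The crude triangle inequality therefore loses too much, because the term $276b_1^2b_2$ and the $-187b_1^4$ term have opposite signs in the extremal configuration. So I would abandon this route.

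Instead, I would substitute Lemma \ref{lemma2.1} into the $p$-expression. Write $p:=p_1\in[0,2]$; by rotation invariance of the class one may assume $p_1\ge0$. This gives
\begin{equation*}
a_5-a_2^4=\frac{1}{4608}\bigl(\nu_1(p,\gamma)+\nu_2(p,\gamma)\eta+\nu_3(p,\gamma)\eta^2+\psi(p,\gamma,\eta)\rho\bigr),
\end{equation*}
where the $\rho$-term is $288(4-p^2)(1-|\gamma|^2)(1-|\eta|^2)\rho$. Using $|\rho|\le1$, $x=|\gamma|$, $y=|\eta|$, one obtains a majorant $F(p,x,y)=\frac{1}{4608}\bigl(g_1+g_2y+g_3y^2+g_4(1-y^2)\bigr)$ on the cuboid $[0,2]\times[0,1]\times[0,1]$. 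Here $g_4=288(4-p^2)(1-x^2)$, and $g_1(p,x)=|\nu_1|$ is bounded by replacing each signed monomial in $\gamma$ by its modulus. One should keep the dominant $p^4$ part exactly, i.e.\ retain $\tfrac{295-24-24-72}{\cdot}$-type cancellations at $x=0$ rather than splitting them, to avoid the same loss as before. This may require bounding $|\nu_1|$ more cleverly, e.g.\ as $\bigl|{-}\tfrac{187}{8}\cdot 16\cdot(p/2)^4+(4-p^2)\,\cdot(\ldots)\bigr|$.

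Then I would maximize $F$ exactly as in the $H_{3,1}$ theorem. First rule out interior critical points via $\partial F/\partial y=0$. Next treat the six faces and twelve edges. On the face $p=2$, $F\equiv 187/288$. On $p=0$, $F=\tfrac{576}{4608}\max\{\ldots\}\le \tfrac18+\ldots$, well below $187/288\approx0.649$. The main obstacle is the faces $y=0$ and $y=1$ with $p$ near $2$, where one must verify $F(p,x,y)\le F(2,x,y)$. For this I would factor out $(4-p^2)$ and show that $F(p,x,y)-187/288=(4-p^2)\,Q(p,x,y)$ with $Q\le0$ on the cuboid, checking the sign of $Q$ by bounding its coefficients polynomially in $x$.
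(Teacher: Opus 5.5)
\paragraph{The gap: the final step fails because the inequality is false.} Your last step, showing $F(p,x,y)-\tfrac{187}{288}=(4-p^2)Q$ with $Q\le 0$ near $p=2$, cannot be carried out. This is not an artifact of majorization: the stated bound is itself false.

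\paragraph{Counterexample.} Take $w(z)=z\,\frac{r-z}{1-rz}$ with $0<r<1$. Equivalently $p(z)=\frac{1-z^2}{1-2rz+z^2}$, so $p_n=2\cos n\theta$ with $r=\cos\theta$. In Lemma~\ref{lemma2.1} this is $p_1=2r$, $\gamma=-1$, so it lies on the face $x=1$ of your cuboid, where $\eta$ and $\rho$ play no role. Here $b_1=r$, $b_2=-(1-r^2)$, $b_3=-r(1-r^2)$, $b_4=-r^2(1-r^2)$, and your formulas give
\[
288\,(a_5-a_2^4)=401r^4-660r^2+72 .
\]
This quantity is negative for $p=2r\in[1,2]$. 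On that range it gives
\[
4608\Bigl(|a_5-a_2^4|-\tfrac{187}{288}\Bigr)=(4-p^2)(401p^2-1036).
\]
Since your $F$ majorizes $|a_5-a_2^4|$, this forces $F(p,1,y)-\frac{187}{288}>0$, hence $Q>0$, for $\sqrt{1036/401}\approx1.61<p<2$.

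Concretely, $r^2=4/5$ gives $288(a_5-a_2^4)=-4984/25$, so $|a_5-a_2^4|=623/900\approx0.6922>187/288\approx0.6493$. The worst member of this family, at $r^2=330/401$, gives $2223/3208\approx0.6930$. The function $f(z)=z\exp\int_0^z\frac{B(w(t))-1}{t}\,dt$ belongs to $\mathcal{S}^*_B$. So no sharper majorant can rescue the constant $187/288$.

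\paragraph{Your spot check, and where the paper's proof breaks.} Your spot check at $x=0.8$, $y=0.36$ was the right alarm, and it is exactly where the paper's own proof breaks. The paper asserts that $y^*=\frac{23}{28}x(1+x)$ never lies in $\Omega$, although it does for $0\le x\le 28/51$. It then compares only the corners $(0,0)$ and $(1,0)$ and never examines the arc $y=1-x^2$. On that arc $288H^*=72+372x^2-257x^4$, which reaches $53100/257\approx206.6$.

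\paragraph{Your diagnosis of the overshoot is wrong.} In the example above $b_1>0$ while $b_2,b_3,b_4<0$. So $72b_4$, $168b_1b_3$, $276b_1^2b_2$ and $-187b_1^4$ all have the same sign, and only the small $72b_2^2$ term opposes them. The triangle inequality is therefore nearly tight there ($205.12$ versus $199.36$ at $r^2=4/5$), not lossy.

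\paragraph{What these tools actually give.} The paper's majorant $H^*$, maximized correctly over $\Omega$, yields only $|a_5-a_2^4|\le 1475/2056\approx0.7174$. Combined with the example,
\[
\tfrac{2223}{3208}\le\sup_{f\in\mathcal{S}^*_B}|a_5-a_2^4|\le\tfrac{1475}{2056}.
\]
Pinning down the sharp constant would require a genuinely new argument.
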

    \begin{proof}
    Let $f\in \mathcal{S}^{\ast}_{B}$. Then using \eqref{a2a3a4 in terms of w} and \eqref{a5inw}, we get
    \begin{equation}
        a_5-a_2^4=\dfrac{1}{288}\left(72b_4+72b_2^2+168b_1b_3+276b_1^2b_2-187b_1^4\right).\label{a5-a2^4}
    \end{equation}
    Now using lemma\ref{lem3} in \eqref{a5-a2^4}, we obtain
    \begin{eqnarray}
    |a_5-a_2^4|&\leq&\dfrac{1}{288}\left|72b_4+72b_2^2+168b_1b_3+276b_1^2b_2-187b_1^4\right|\nonumber\\
        &\leq&\dfrac{1}{288}\left(72|b_4|+72|b_2|^2+168|b_1||b_3|+276|b_1|^2|b_2|+187|b_1|^4\right)\nonumber\\
        &\leq&\dfrac{1}{288}\Big(72(1-|b_1|^2-|b_2|^2)+72|b_2|^2+168|b_1|\left(1-|b_1|^2-\dfrac{|b_2|^2}{1+|b_1|}\right)+\nonumber \\
        & & 276|b_1|^2|b_2|+187|b_1|^4\Big)
    \end{eqnarray}
    Now taking $x:=|b_1|$ and $y:=|b_2|$, we get
    \begin{equation}
        |a_5-a_2^4|\leq\dfrac{1}{288}\left(72(1-x^2-y^2)+72y^2+168x\left(1-x^2-\dfrac{y^2}{1+x}\right)+276x^2y+187x^4\right) \label{H2x,y}
    \end{equation}
    Now take $H^*(x,y):=\dfrac{1}{288}\left(72(1-x^2-y^2)+72y^2+168x\left(1-x^2-\dfrac{y^2}{1+x}\right)+276x^2y+187x^4\right)$
    From \eqref{H2x,y}, we get
    \begin{equation}
        |a_5-a_2^4|\leq H^*(x,y).
    \end{equation}
    Now we need to find the maximum value of $H^*(x,y)$ in the region $\Omega:=\{(x,y):\;x\ge0,\;y\ge0,\;y\le1-x^2\}$. First we'll check interior of $\Omega$. From $\partial H^*/\partial y=0$, we obtain
    \[y=\dfrac{23}{28}x(1+x):=y^*\]
    Since $y^*\not\le 1-x^2$ for all $x\in[0,1]$, hence there doesn't exists solution.
    Now we'll check for $y=0$ and $x=0$, we get $H^*(0,0)=1/4$. and For $y=0$ and $x=1$, we get $H^*(0,1)=187/288\approx0.6493$. Hence from \eqref{H2x,y}, we get
    \[
     |a_5-a_2^4|\leq\dfrac{187}{288}.
    \]
    This inequality is sharp and the extremal for this inequality is obtained by the function $f_1(z)\in \mathcal{S}^{\ast}_{B}$, defined as
    \begin{equation}
    f_{1}(z) = z \exp\left(\int_{0}^{z} \dfrac{\log(1+t)}{t(1-\log(1+t))} \, dt\right) =z + z^2+\dfrac{3}{4}z^3+\dfrac{19}{36} z^4+\dfrac{101}{288}z^5+ \dfrac{551}{2400} z^6+\cdots . \label{f4}
    \end{equation}
    This completes the proof.
\end{proof}
\subsection*{Toeplitz Determinant}

Now, we investigate the third-order Toeplitz determinant for functions belonging to the class $\mathcal{S}^*_{B}$. Sharp bound for the determinant is obtained, and the corresponding extremal function is identified.

\begin{theorem}
Let $f \in \mathcal{S}^*_{B}$. Then 
\begin{equation}
|T_{3,1}(f)| \leq 1. \nonumber
\end{equation}
The above estimate is sharp.
\end{theorem}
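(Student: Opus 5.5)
The plan is to follow the Schwarz-function approach already used for the Zalcman and Krushkal functionals. By \eqref{a2a3a4 in terms of w} we have $a_2=b_1$ and $a_3=\tfrac14(3b_1^2+2b_2)$. Substituting these into \eqref{toeplitz} writes $T_{3,1}(f)=1+2a_2^2a_3-2a_2^2-a_3^2$ as an explicit polynomial in $b_1,b_2$:
\[
T_{3,1}(f)=1-2b_1^2+\tfrac{3}{2}b_1^4+b_1^2b_2-\tfrac{1}{16}\bigl(3b_1^2+2b_2\bigr)^2 .
\]
I will then set $x:=|b_1|$ and $y:=|b_2|$ and use Lemma~\ref{lem3} to bound $|T_{3,1}(f)|$ by a function $G(x,y)$. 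The resulting maximization over $\Omega=\{(x,y):x\ge0,\ y\ge0,\ y\le 1-x^2\}$ would be carried out exactly as for $H(x,y)$ and $H^*(x,y)$: locate the interior critical points, then check the edges $y=0$, $x=0$ and $y=1-x^2$. Sharpness at $b_1=b_2=0$ would come from the identity function, where $T_{3,1}=1$.

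Before doing this optimization I would test the candidate extremal functions, and here I expect the main obstacle: the test suggests the stated bound fails for complex coefficients. The class $\mathcal{S}^*_B$ is rotation invariant, so $e^{-i\theta}f_1(e^{i\theta}z)\in\mathcal{S}^*_B$ with $f_1$ as in \eqref{f4}. This rotated function has $a_2=e^{i\theta}$ and $a_3=\tfrac34e^{2i\theta}$. Taking $\theta=\pi/2$ gives $a_2^2=-1$ and $a_3=-\tfrac34$. Then
\[
T_{3,1}=1+\tfrac32+2-\tfrac{9}{16}=\tfrac{63}{16}>1 .
\]
So a triangle-inequality bound of the above type cannot return $1$. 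The only place the estimate can close at $1$ is when $a_2^2$ and $a_3$ are real and of suitable sign, for instance for the real-coefficient member $f_1$ itself. There $a_2=1$ and $a_3=\tfrac34$, giving $T_{3,1}=1+\tfrac32-2-\tfrac9{16}=-\tfrac1{16}$.

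Concretely, I would first carry out the maximization of $G$ over $\Omega$. On the edge $y=0$ the triangle-inequality majorant is
\[
1+2x^2+\tfrac32x^4+\tfrac{9}{16}x^4 ,
\]
which is increasing in $x$ and reaches $\tfrac{63}{16}$ at $x=1$. This is consistent with the rotated $f_1$. If that computation confirms $\sup|T_{3,1}(f)|\ge \tfrac{63}{16}$, then the statement as worded cannot be proved, and the rotated $f_1$ is a counterexample to it. In that case the correct conclusion of the argument is the bound $\tfrac{63}{16}$, which is sharp for the rotated $f_1$, and not the bound $1$. The decisive step, which I would do first, is therefore this evaluation of $T_{3,1}$ at the rotated $f_1$: it determines whether the theorem is true before any case analysis is undertaken.
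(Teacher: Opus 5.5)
Your refutation is correct: the statement is false as written, so neither your plan nor the paper's argument can prove it. The function $g(z)=-if_1(iz)=z+iz^2-\tfrac34z^3+\cdots$ lies in $\mathcal{S}^*_B$, since $zg'(z)/g(z)=B(iz)$. It gives $T_{3,1}(g)=1+\tfrac32+2-\tfrac9{16}=\tfrac{63}{16}$. In the paper's representation \eqref{98} this is the choice $p_1=2i$, $p_2=-2$, which gives $1008/256$.

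\textbf{Where the paper's proof breaks.} The failure is the one-line step ``applying Lemma~\ref{lemma2.1}, we conclude $|T_{3,1}(f)|\le 1$'', which is never justified.
\begin{itemize}
\item The step tacitly uses the customary normalization $p_1\in[0,2]$. That normalization is not available here, because $T_{3,1}$ is not rotation invariant: under $f(z)\mapsto e^{-i\theta}f(e^{i\theta}z)$, the term $a_2^2$ picks up $e^{2i\theta}$, while $a_2^2a_3$ and $a_3^2$ pick up $e^{4i\theta}$.
\item Even within that normalization the step is false. Taking $p_1=0$, $\gamma=i$ gives $p_2=2i$, and \eqref{98} then equals $\tfrac54$. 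This is $w(z)=iz^2$, with $a_2=0$, $a_3=\tfrac{i}{2}$ and $T_{3,1}=1-a_3^2=\tfrac54$.
\item As you computed, the paper's proposed extremal $f_1$ gives $T_{3,1}=-\tfrac1{16}$, so its sharpness claim is also wrong. The value $1$ is attained, for example, by $f(z)=z$.
\end{itemize}

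\textbf{A slip in your determination of the correct bound.} The edge majorant you wrote, $1+2x^2+\tfrac32x^4+\tfrac9{16}x^4$, equals $\tfrac{81}{16}$ at $x=1$, not $\tfrac{63}{16}$. Applying the triangle inequality to the uncollected polynomial over-counts the $b_1^4$ and $b_1^2b_2$ terms, so it would only give $|T_{3,1}(f)|\le\tfrac{81}{16}$. To certify $\tfrac{63}{16}$ as the true maximum, collect terms first:
\[
T_{3,1}(f)=(1-a_2^2)^2-(a_3-a_2^2)^2=1-2b_1^2+\tfrac{15}{16}b_1^4+\tfrac14 b_1^2b_2-\tfrac14 b_2^2 .
\]
Set $x=|b_1|$ and $y=|b_2|\le 1-x^2$ (Lemma~\ref{lem3}). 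The majorant $1+2x^2+\tfrac{15}{16}x^4+\tfrac14x^2y+\tfrac14y^2$ is increasing in $y$. Setting $y=1-x^2$ gives $\tfrac54+\tfrac74x^2+\tfrac{15}{16}x^4$, which is increasing in $x$ and at most $\tfrac{63}{16}$. Hence $|T_{3,1}(f)|\le\tfrac{63}{16}$, with equality for $g$.

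The same identity justifies your remark about real coefficients. If $a_2$ and $a_3$ are real, then $u=a_2^2\in[0,1]$ and $|a_3-u|\le\tfrac12-\tfrac u4$. This gives $-\tfrac1{15}\le T_{3,1}(f)\le(1-u)^2\le 1$. So the bound $1$ holds only under such a reality restriction.
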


\begin{proof}
Let $f \in \mathcal{S}^*_{B}$. Then, in view of \eqref{toeplitz} for third-order Toeplitz determinant, we have
\begin{equation}
T_{3,1}(f) = 1 - 2a_2^2 - a_3^2 + 2a_2^2 a_3. \nonumber
\end{equation}
Substituting the coefficient expressions given in \eqref{a2} and \eqref{a3} into the above identity, we obtain
\begin{eqnarray}
T_{3,1}(f)
&=& \frac{1}{256}\left(7p_1^4 - 128p_1^2 - 16p_2^2 + 24p_1^2 p_2 + 256 \right) \nonumber\\
&=& \frac{1}{256}\left(256 - 128p_1^2 + 15p_1^4
+ 16(p_1^2 - p_2)\left(p_2 - \tfrac{1}{2}p_1^2\right)\right).
\label{98}
\end{eqnarray}
Taking absolute values on both sides of \eqref{98}, we arrive at
\begin{equation}
|T_{3,1}(f)|\leq\frac{1}{256}\left|256 - 128p_1^2 + 15p_1^4+ 16(p_1^2 - p_2)\left(p_2 - \tfrac{1}{2}p_1^2\right)\right|. \nonumber
\end{equation}
Applying~\cref{lemma2.1}, we conclude that
\begin{equation}
|T_{3,1}(f)| \leq 1. \nonumber
\end{equation}
This establishes the desired upper bound. To show that the result is sharp, we consider the extremal function
\begin{equation}
f_{3}(z)
= z \exp\left(\int_{0}^{z} \frac{\log(1+t)}{t\bigl(1-\log(1+t)\bigr)} \, dt\right)= z + z^2 + \frac{3}{4}z^3 + \frac{19}{36}z^4 + \frac{101}{288}z^5 + \cdots. \nonumber
\end{equation}

A direct computation shows that equality holds for this function, thereby confirming the sharpness of the estimate.
\end{proof}


\subsection*{Hermitian-Toeplitz Determinant}
We now proceed to investigate the third-order Hermitian-Toeplitz determinant for functions belonging to the class $\mathcal{S}^*_{B}$. Sharp upper and lower bounds for the determinant are derived, and the corresponding extremal function is identified.
\begin{theorem}
Let $f \in \mathcal{S}^*_{B}$, then 
\begin{equation}
 -\dfrac{1}{16}\leq H^T_{3,1}(f)\leq 1. \nonumber
\end{equation}
This inequality is sharp.
\end{theorem}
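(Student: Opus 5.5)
The plan is to rewrite $H^T_{3,1}(f)$ in terms of the Schwarz function coefficients, rather than the Carath\'eodory coefficients. From \eqref{a2a3a4 in terms of w} we have $a_2=b_1$ and $a_3=\tfrac14(3b_1^2+2b_2)$.

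\textbf{Step 1: normalize $b_1$.} The class $\mathcal{S}^*_B$ is rotation invariant: $e^{-\iota\theta}f(e^{\iota\theta}z)$ replaces $a_2,a_3$ by $e^{\iota\theta}a_2,\,e^{2\iota\theta}a_3$. This leaves each term of \eqref{T31f} unchanged. We may therefore assume $b_1=x\in[0,1]$. Write $b_2=c$ with $|c|=s\le 1-x^2$ (by Lemma \ref{lem3}) and $u=\Re c$.

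\textbf{Step 2: reduce to two real variables.} A direct expansion of \eqref{T31f} gives
\[
H^T_{3,1}(f)=1-2x^2+\tfrac{1}{16}\bigl(15x^4+4x^2u-4s^2\bigr),\qquad |u|\le s\le 1-x^2 .
\]
Everything now reduces to a two-variable real extremal problem. The steps in this problem are elementary, so no lemma on $p_4$ or three-variable cuboid analysis is needed.

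\textbf{Step 3: upper bound.} Use $u\le s$. Maximizing $\tfrac14(x^2s-s^2)$ over $s\ge0$ gives at most $x^4/16$. Hence
\[
H^T_{3,1}(f)\le 1-2x^2+x^4=(1-x^2)^2\le 1 .
\]
Equality requires $x=0$ and $c=0$, i.e.\ $a_2=a_3=0$. This is attained by $f_4$ (the $t^3$-function used for $H_{3,1}$), or equally by $f_3$ built from $t^4$.

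\textbf{Step 4: lower bound.} Use $u\ge -s$. The expression is then decreasing in $s$, so take $s=1-x^2$. With $X=x^2\in[0,1]$ this gives
\[
H^T_{3,1}(f)\ge \tfrac34-\tfrac74X+\tfrac{15}{16}X^2 .
\]
Minimizing this quadratic on $[0,1]$ is the main obstacle, and I expect it to be the step that fails. At the endpoint $X=1$ the value is $-\tfrac1{16}$. This is exactly $H^T_{3,1}(f_1)$, since $a_2=1$ and $a_3=\tfrac34$ give $1-2-\tfrac9{16}+\tfrac32=-\tfrac1{16}$.

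However, the vertex $X=\tfrac{14}{15}$ lies inside $[0,1]$, and there the quadratic equals $-\tfrac1{15}$. This value appears to be admissible. Take $b_1=\sqrt{14/15}$ and $b_2=-\tfrac1{15}$, so that $|b_2|=1-|b_1|^2$. These are realized by $w(z)=z\,\phi(z)$ with $\phi$ a disk automorphism, $\phi(0)=b_1$ and $\phi'(0)=b_2$. The resulting coefficients $a_2=\sqrt{14/15}$, $a_3=\tfrac23$ give $H^T_{3,1}=-\tfrac1{15}$.

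So I would first recheck the coefficient formulas \eqref{a2a3a4 in terms of w}. If they stand, the correct sharp lower bound should be $-\tfrac1{15}$, attained by the function generated through this Schwarz function, rather than the stated $-\tfrac1{16}$. The upper bound $1$ is unaffected.
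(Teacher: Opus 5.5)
Your analysis is correct: the stated lower bound $-\tfrac1{16}$ is false, and the error is in the paper's proof, not in your computation.

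\textbf{Your coefficients and identity are right.} From $B(z)=1+z+\tfrac12 z^2+\cdots$ one gets $a_2=b_1$ and $a_3=\tfrac14(3b_1^2+2b_2)$. Under $b_1=p_1/2$, $b_2=\tfrac14\gamma(4-p_1^2)$, your reduced formula is exactly the paper's final identity
\[
256\,H^T_{3,1}(f)=256-128p_1^2+15p_1^4+4p_1^2(4-p_1^2)\Re\gamma-4(4-p_1^2)^2|\gamma|^2 .
\]
The paper's intermediate lines for $2\Re(a_2^2\overline{a_3})$ and $|a_3|^2$ use expressions for $a_2^2$ and $a_3$ that do not belong to this class. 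They still end at this correct identity, so they are not the source of the discrepancy.

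\textbf{Where the paper's proof fails.} The error is the passage from \eqref{12} to \eqref{minmax}. The function $h(p,x)$ is built by replacing $\Re\gamma$ with $|\gamma|$, so it only majorizes $256\,H^T_{3,1}(f)$. Its minimum $-16$ therefore gives no lower bound. A lower bound requires $\Re\gamma\ge-|\gamma|$. The resulting expression is decreasing in $|\gamma|$, and at $|\gamma|=1$ it equals $192-112p^2+15p^4$. Its minimum over $p\in[0,2]$ is $-256/15$, attained at $p^2=56/15$. This is exactly your vertex $X=14/15$ with $\gamma=-1$. The paper's lower extremal ($w(z)=z$) only realizes the endpoint value $-\tfrac1{16}$ at $X=1$.

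\textbf{Your extremal function is admissible.} The map $w(z)=z(c-z)/(1-cz)$ with $c=\sqrt{14/15}$ is a Schwarz function; equivalently $p(z)=(1-z^2)/(1-2cz+z^2)\in\mathcal{P}$. Since $B$ is analytic on $\mathbb{D}$, the function $f(z)=z\exp\int_0^z \frac{B(w(t))-1}{t}\,dt$ lies in $\mathcal{S}^*_B$. It has $a_2^2=\tfrac{14}{15}$ and $a_3=\tfrac23$, hence
\[
H^T_{3,1}(f)=1-\tfrac{28}{15}+\tfrac{56}{45}-\tfrac49=-\tfrac1{15}<-\tfrac1{16}.
\]

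\textbf{Conclusion.} Your Steps 3--4 prove the correct sharp statement $-\tfrac1{15}\le H^T_{3,1}(f)\le 1$. The upper half is the paper's argument after the reparametrization $b_2=\gamma(1-b_1^2)$. Your closed form $(1-x^2)^2$ makes explicit what the paper leaves as a ``straightforward computation''.
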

\begin{proof}
Let $f \in \mathcal{S}^*_{B}$. From \eqref{T31f}, the third-order Hermitian-Toeplitz determinant can be expressed as
\begin{equation}
H^T_{3,1}(f)=2\Re(a_2^2\overline{a_3})-2|a_2|^2-|a_3|^2+1. \label{15}
\end{equation}
Using \eqref{a2}, \eqref{a3} and~\cref{lemma2.1}, we obtain
\begin{eqnarray}
2\Re(a_2^2\,\overline{a_3})
&=&2\,\Re \left(\dfrac{p_1^2}{16}\left(\dfrac{1}{6}\overline{p_2}-\dfrac{1}{24}p_1^2\right)\right)
=\dfrac{1}{192}p_1^4+\dfrac{1}{96}p_1^2(4-p_1^2)\Re(\overline{\gamma}), \label{17}\\
2|a_2|^2
&=&\dfrac{1}{2}p_1^2, \label{16}\\
|a_3|^2
&=&\left|\dfrac{1}{6}p_2-\dfrac{1}{24}p_1^2\right|^2
=\left|\dfrac{1}{24}p_1^2+\dfrac{1}{12}(4-p_1^2)\gamma\right|^2 \nonumber\\
&=&\dfrac{1}{576}p_1^4+\dfrac{1}{144}p_1^2(4-p_1^2)\Re (\gamma)
+\dfrac{1}{144}(4-p_1^2)^2|\gamma|^2. \label{18}
\end{eqnarray}
Substituting \eqref{17}, \eqref{16}, and \eqref{18} into \eqref{15}, we arrive at
\begin{eqnarray}
H^T_{3,1}(f)
&=&\dfrac{1}{192}p_1^4+\dfrac{1}{96}p_1^2(4-p_1^2)\Re(\overline{\gamma})-\dfrac{1}{2}p_1^2-\dfrac{1}{576}p_1^4+\dfrac{1}{144}p_1^2(4-p_1^2)\Re (\gamma)+ \nonumber\\
& &\quad \dfrac{1}{144}(4-p_1^2)^2|\gamma|^2+1 \nonumber\\
&=&\dfrac{15}{256}p_1^4-\dfrac{1}{2}p_1^2+\dfrac{1}{64}p_1^2(4-p_1^2)\Re(\gamma)-\dfrac{1}{64}(4-p_1^2)^2|\gamma|^2+1 \nonumber\\
&=&\dfrac{1}{256}\Big(256+15p_1^4-128p_1^2+4p_1^2(4-p_1^2)\Re (\gamma)-4(4-p_1^2)^2|\gamma|^2\Big). \nonumber
\end{eqnarray}
Since $\Re (\gamma)\leq|\gamma|$, it follows that
\begin{equation}
H^T_{3,1}(f)
\leq\dfrac{1}{256}\Big(256+15p_1^4-128p_1^2
+4p_1^2(4-p_1^2)|\gamma|
-4(4-p_1^2)^2|\gamma|^2\Big)
:=\dfrac{1}{256}h(p_1,\gamma), \label{12}
\end{equation}
where $p_1:=p\in[0,2]$ and $|\gamma|:=x\in[0,1]$. Under this notation, \eqref{12} reduces to
\begin{equation}
h(p,x)=256+15p^4-128p^2+4p^2(4-p^2)x-4(4-p^2)^2x^2. \nonumber
\end{equation}
Consequently, we may write
\begin{equation}
\dfrac{1}{256}\min h(p,x)
\leq H^T_{3,1}(f)
\leq \dfrac{1}{256}\max h(p,x). \label{minmax}
\end{equation}
\begin{figure}[H]
    \centering
    \includegraphics[width=0.4\textwidth]{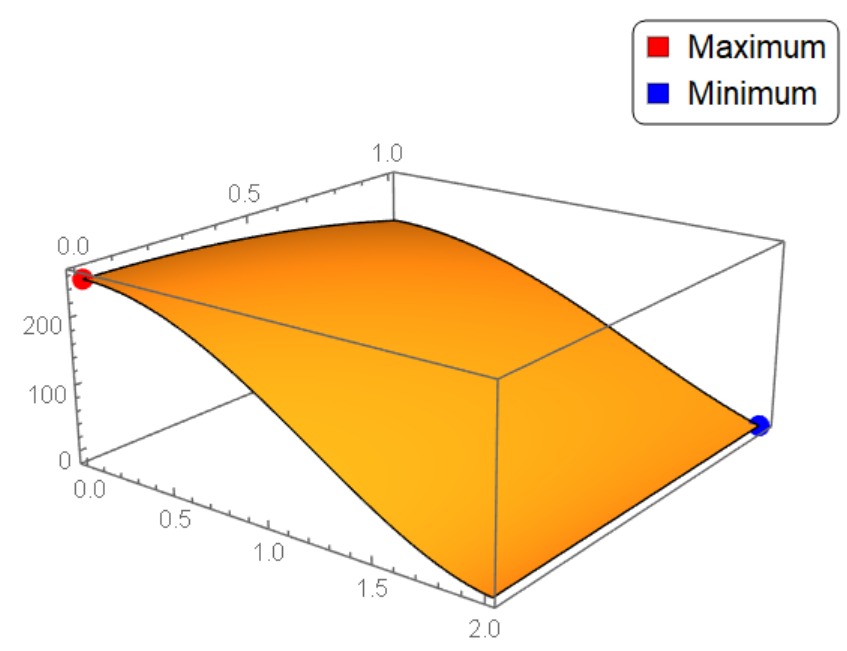} 
    \caption{ $3D$ plot of $h(p,x)$, for $p\in[0,2],\; x \in [0,1]$.}
    \label{fig:phi}
\end{figure}
A straightforward computation shows that maximum value of $h(p,x)$ attains at $(p,x)=(0,0)$ and minimum value attains at $(p,x)=(2,1/2)$ for $p\in[0,2],\;x\in[0,1]$ i.e.
\begin{equation}
\max h(p,x)=256, \quad \min h(p,x)=-16. \label{19}
\end{equation}

Using \eqref{19} in \eqref{minmax}, we obtain
\begin{equation}
H^T_{3,1}(f)\leq\dfrac{1}{256}\max h(p,x)=1, \nonumber
\end{equation}
and
\begin{equation}
H^T_{3,1}(f)\ge\dfrac{1}{256}\min h(p,x) \nonumber
=-\dfrac{1}{16}.
\end{equation}
Hence,
\begin{equation}
-\dfrac{1}{16}\leq H^T_{3,1}(f)\leq 1. \nonumber
\end{equation}
The extremal function corresponding to the upper bound is given by
\begin{equation}
f_{2}(z)
= z \exp\left(\int_{0}^{z} \dfrac{\log(1+t^4)}{t(1-\log(1+t^4))} \, dt\right)
= z + \dfrac{1}{4} z^5 + \dfrac{1}{32} z^9 + \cdots, \nonumber
\end{equation}
while the extremal function for the lower bound is
\begin{equation}
f_{3}(z)
= z \exp\left(\int_{0}^{z} \dfrac{\log(1+t)}{t(1-\log(1+t))} \, dt\right)
= z + z^2 + \dfrac{3}{4} z^3 + \dfrac{19}{36} z^4
+ \dfrac{101}{288} z^5 + \cdots. \nonumber
\end{equation}
This completes the proof.
\end{proof}

\section*{Conclusion}
In the present investigation, we studied several sharp coefficient estimate problems associated with the class $\mathcal{S}^*_{B}$. These results demonstrate a strong connection between coefficient functionals and determinant problems, highlighting the influence of the underlying geometric domain on these quantities. The results established in this paper also have potential implications beyond pure function theory. For instance, convexity and disk-type conditions are relevant in areas such as control theory and motion planning, where they are related to stability and constraint regions. Similarly, Hankel, Toeplitz determinants arise in image processing, signal processing and discrete quantum mechanics, while the Krushkal inequality and Zalcman functional are connected to quasiconformal mapping theory and fluid flow models. These findings open avenues for further application oriented research. Moreover, the techniques developed in this paper are expected to be applicable to other related subclasses defined via different geometric or analytic conditions.


\begin{thebibliography}{99}

\bibitem{Banga_third_hankel} Banga, S., Kumar, S.~S.: The sharp bounds of the second and third Hankel determinants for the class $\mathcal{SL}^*$. Math. Slovaca 70(4), 849-862 (2020)

\bibitem{caratheodory1907} Carath\'eodory, C.: \"{U}ber den Variabilit\"{a}tsbereich der Koeffizienten von Potenzreihen, die gegebene Werte nicht annehmen. Math. Ann. 64(1), 95-115 (1907)

\bibitem{cho_kumar_kumar_ravichandran2019} Cho, N.~E., Kumar, V., Kumar, S.~S., Ravichandran, V.: Radius problems for starlike functions associated with the sine function. Bull. Iranian Math. Soc. 45(1), 213-232 (2019) \href{https://doi.org/10.1007/s41980-018-0127-5}{https://doi.org/10.1007/s41980-018-0127-5}

\bibitem{cho_kumar_kumar2022} Cho, N.~E., Kumar, S., Kumar, V.: Hermitian-Toeplitz and Hankel determinants for certain starlike functions. Asian Eur. J. Math. 15(3), 2250042(2022)

\bibitem{giri2025} Giri, S.: Second-Order Toeplitz Determinant for Starlike Mappings in One and Higher Dimensions. Anal. Math. Phys. 15(4), 96(2025) \href{https://doi.org/10.1007/s13324-025-01098-y}{https://doi.org/10.1007/s13324-025-01098-y}

\bibitem{Goodman1983} Goodman, A.~W.: Univalent Functions. Mariner Publishing Company Inc. Tampa (1983)

\bibitem{Jastrzebski_hermitian} Jastrzebski, P., Kowalczyk, B., Kwon, O.~S., Lecko, A., Sim, Y.J.: Hermitian-Toeplitz determinants of the second and third-order for classes of close-to-star functions. Rev. Real Acad. Cienc. Exactas Físicas Nat. Ser. Matemt\'aicas 114, 1-14 (2020) \href{https://doi.org/10.https://doi.org/10.1007/s13398-020-00895-31007/s13398-020-00895-3}{https://doi.org/10.1007/s13398-020-00895-3}

\bibitem{Kowalczyk_and_lecko_third_hankel} Kowalczyk, B., Lecko, A., Thomas, D.~K.: The sharp bound of the third Hankel determinant for starlike functions. Forum Math. 34(5), 1249-1254 (2022)

\bibitem{kumar_verma2024} Kumar, S.~S., Verma, N.: On a Subclass of Starlike Functions Associated With a Strip Domain. Ukrainian Math. J. 76(12), 1783-1801 (2024) \href{https://doi.org/10.1007/s11253-025-02434-y}{https://doi.org/10.1007/s11253-025-02434-y}

\bibitem{kumar_arya_balloon} Kumar, S.~S., Tripathi, A.: Coefficient problems of Starlike Functions Related to a balloon-shaped Domain. Iran.  J. Sci. (In Press)

\bibitem{kwon_adam_p1p2lemma} Kwon, O.~S., Lecko, A., Sim, Y.~J.: On the fourth coefficient of functions in the Carath\'{e}odory class. Comput. Methods Funct. Theory 18(2), 307-314 (2018) \href{https:/https://doi.org/10.1007/s40315-017-0229-8/doi.org/10.1007/s40315-017-0229-8}{https://doi.org/10.1007/s40315-017-0229-8}

\bibitem{Lecko2020} Lecko, A., Sim, Y.~J., \'Smiarowska, B.: The fourth-order Hermitian Toeplitz determinant for convex functions. Anal. Math. Phys. 10(3), 39(2020) \href{https://doi.org/10.1007/s13324-020-00382-3}{https://doi.org/10.1007/s13324-020-00382-3}

\bibitem{libera_p1p2lemma} Libera, R.~J., Z\l otkiewicz, E.~J.: Early coefficients of the inverse of a regular convex function. Proc. Amer. Math. Soc. 85(2), 225-230 (1982)

\bibitem{MaMinda1994} Ma, W.: A unified treatment of some special classes of univalent functions. In: Proceedings of the Conference on Complex Analysis (Tianjin, 1992), 157-169, Conf. Proc. Lecture Notes Anal., I Int. Press, Cambridge (1992)

\bibitem{Obradovic_hermitian} Obradovi\'c, M., Tuneski, N.: Hermitian-Toeplitz determinants for the class ${\mathcal {S}} $ of univalent functions. Armen. J. Math. 13(4), 1-10 (2021) \href{https://doi.org/10.52737/18291163-2021.13.4-1-10}{https://doi.org/10.52737/18291163-2021.13.4-1-10}

\bibitem{Pommerenke1967} Pommerenke, C.: On the Hankel determinants of univalent functions. Mathematika 14(1), 108-112 (1967) 

\bibitem{Pommerenke1975} Pommerenke, C.: Univalent Functions. Vandenhoeck and Ruprecht, G\"ottingen (1975)

\bibitem{Rath_and_lecko_third_hankel} Rath, B., Kumar, K.~S., Krishna, D.~V., Lecko, A.: The sharp bound of the third Hankel determinant for starlike functions of order $1/2$. Complex Anal. Oper. Theory 16(5), 65(2022) \href{https://doi.org/10.1007/s11785-022-01241-8}{https://doi.org/10.1007/s11785-022-01241-8}

\bibitem{riaz_lecko_2023} Riaz, A., Lecko, A., Raza, M.: Starlikeness Associated with Cosine Hyperbolic Function. Iran. J. Sci. 47(5), 1723-1738 (2023) \href{https://doi.org/10.1007/s40995-023-01539-y}{https://doi.org/10.1007/s40995-023-01539-y}

\bibitem{Sokol1996} Sok\'o\l, J., Stankiewicz, J.: Radius of convexity of some subclasses of strongly starlike functions. Zesz. Nauk. Politech. Rzesz. Mat. 19, 101-105 (1996)

\bibitem{sarkar_hermitian_toeplitz} Sarkar, N.: Bounds on Hermitian-Toeplitz determinant for starlike, convex and bounded turning functions associated with the exponential function. Mat. Stud. 64(2), 115-123 (2025)

\bibitem{zaprawa2021thirdhankel} Zaprawa, P., Obradovi\'c, M., Tuneski, N.: Third Hankel determinant for univalent starlike functions. Revista de la Real Academia de Ciencias Exactas, F\'isicas y Naturales. Serie A. Matem\'aticas 115(2), 49(2021) \href{https://doi.org/10.1007/s13398-020-00977-2}{https://doi.org/10.1007/s13398-020-00977-2}

\bibitem{zaprawa2021} Zaprawa, P.: Initial logarithmic coefficients for functions starlike with respect to symmetric points. Bol. Soc. Mat. Mex. 27(62), (2021)


\end{thebibliography}
\end{document}